\begin{document}

\newtheorem{theorem}[subsection]{Theorem}
\newtheorem{proposition}[subsection]{Proposition}
\newtheorem{lemma}[subsection]{Lemma}
\newtheorem{corollary}[subsection]{Corollary}
\newtheorem{conjecture}[subsection]{Conjecture}
\newtheorem{prop}[subsection]{Proposition}
\newtheorem{defin}[subsection]{Definition}

\numberwithin{equation}{section}
\newcommand{\mr}{\ensuremath{\mathbb R}}
\newcommand{\mc}{\ensuremath{\mathbb C}}
\newcommand{\dif}{\mathrm{d}}
\newcommand{\intz}{\mathbb{Z}}
\newcommand{\ratq}{\mathbb{Q}}
\newcommand{\natn}{\mathbb{N}}
\newcommand{\comc}{\mathbb{C}}
\newcommand{\rear}{\mathbb{R}}
\newcommand{\prip}{\mathbb{P}}
\newcommand{\uph}{\mathbb{H}}
\newcommand{\fief}{\mathbb{F}}
\newcommand{\majorarc}{\mathfrak{M}}
\newcommand{\minorarc}{\mathfrak{m}}
\newcommand{\sings}{\mathfrak{S}}
\newcommand{\fA}{\ensuremath{\mathfrak A}}
\newcommand{\mn}{\ensuremath{\mathbb N}}
\newcommand{\mq}{\ensuremath{\mathbb Q}}
\newcommand{\half}{\tfrac{1}{2}}
\newcommand{\f}{f\times \chi}
\newcommand{\summ}{\mathop{{\sum}^{\star}}}
\newcommand{\chiq}{\chi \bmod q}
\newcommand{\chidb}{\chi \bmod db}
\newcommand{\chid}{\chi \bmod d}
\newcommand{\sym}{\text{sym}^2}
\newcommand{\hhalf}{\tfrac{1}{2}}
\newcommand{\sumstar}{\sideset{}{^*}\sum}
\newcommand{\sumprime}{\sideset{}{'}\sum}
\newcommand{\sumprimeprime}{\sideset{}{''}\sum}
\newcommand{\sumflat}{\sideset{}{^\flat}\sum}
\newcommand{\shortmod}{\ensuremath{\negthickspace \negthickspace \negthickspace \pmod}}
\newcommand{\V}{V\left(\frac{nm}{q^2}\right)}
\newcommand{\sumi}{\mathop{{\sum}^{\dagger}}}
\newcommand{\mz}{\ensuremath{\mathbb Z}}
\newcommand{\leg}[2]{\left(\frac{#1}{#2}\right)}
\newcommand{\muK}{\mu_{\omega}}
\newcommand{\thalf}{\tfrac12}
\newcommand{\lp}{\left(}
\newcommand{\rp}{\right)}
\newcommand{\Lam}{\Lambda_{[i]}}
\newcommand{\lam}{\lambda}
\newcommand{\af}{\mathfrak{a}}
\newcommand{\sw}{S_{[i]}(X,Y;\Phi,\Psi)}
\newcommand{\lz}{\left(}
\newcommand{\pz}{\right)}
\newcommand{\bfrac}[2]{\lz\frac{#1}{#2}\pz}
\newcommand{\odd}{\mathrm{\ primary}}
\newcommand{\even}{\text{ even}}
\newcommand{\res}{\mathrm{Res}}
\newcommand{\sumn}{\sumstar_{(c,1+i)=1}  w\left( \frac {N(c)}X \right)}
\newcommand{\lab}{\left|}
\newcommand{\rab}{\right|}
\newcommand{\Go}{\Gamma_{o}}
\newcommand{\Ge}{\Gamma_{e}}
\newcommand{\M}{\widehat}

\theoremstyle{plain}
\newtheorem{conj}{Conjecture}
\newtheorem{remark}[subsection]{Remark}

\makeatletter
\def\widebreve{\mathpalette\wide@breve}
\def\wide@breve#1#2{\sbox\z@{$#1#2$}%
     \mathop{\vbox{\m@th\ialign{##\crcr
\kern0.08em\brevefill#1{0.8\wd\z@}\crcr\noalign{\nointerlineskip}%
                    $\hss#1#2\hss$\crcr}}}\limits}
\def\brevefill#1#2{$\m@th\sbox\tw@{$#1($}%
  \hss\resizebox{#2}{\wd\tw@}{\rotatebox[origin=c]{90}{\upshape(}}\hss$}
\makeatletter

\title[First moment of central values of quadratic Dirichlet $L$-functions]{First moment of central values of quadratic Dirichlet $L$-functions}

\author[P. Gao]{Peng Gao}
\address{School of Mathematical Sciences, Beihang University, Beijing 100191, China}
\email{penggao@buaa.edu.cn}

\author[L. Zhao]{Liangyi Zhao}
\address{School of Mathematics and Statistics, University of New South Wales, Sydney NSW 2052, Australia}
\email{l.zhao@unsw.edu.au}

\begin{abstract}
We evaluate the first moment of central values of the family of quadratic Dirichlet $L$-functions using the method of double Dirichlet series.  Under the generalized Riemann hypothesis, we prove an asymptotic formula with an error term of size that is the fourth root of that of the primary main term.
\end{abstract}

\maketitle

\noindent {\bf Mathematics Subject Classification (2010)}: 11M06, 11M41  \newline

\noindent {\bf Keywords}:  quadratic Dirichlet $L$-functions, first moment, double Dirichlet series

\section{Introduction}\label{sec 1}

  Moments of central values of families of $L$-functions have been widely studied in the literature as they have many important applications. In this paper, we are interested in the first moment of central values of the family of quadratic Dirichlet $L$-functions. For this family, an asymptotic formula for the first moment was initially obtained by M. Jutila \cite{Jutila} with the main term of size $X \log X$ and an error term of size $O(X^{3/4+\varepsilon})$ for any $\varepsilon>0$. An error term of the same size was later given by A. I. Vinogradov and L. A. Takhtadzhyan in \cite{ViTa}. Using the method of double Dirichlet series, D. Goldfeld and J. Hoffstein \cite{DoHo} improved the error term to $O(X^{19/32 + \varepsilon})$. It is also implicit in \cite{DoHo} that one may obtain an error term of size $O(X^{1/2 + \varepsilon})$ for the smoothed first moment, a result that is achieved via a different approach by M. P. Young \cite{Young1} who utlized a recursive argument. The optimal error term is conjectured to be $O(X^{1/4+\varepsilon})$ in \cite{DoHo} and this been observed in a numerical study conducted by M. W. Alderson and M. O. Rubinstein in \cite{AR12}.  In the function field setting, owing partially to the established truth of the Riemann hypothesis there, the analogous asymptotic formula with an error term of the conjectured size was obtained by A. M. Florea \cite{Florea17}.   \newline

 The method of multiple Dirichlet series is a powerful tool when studying moments of $L$-functions. The success of such method relies
heavily on the analytic properties of these series. In \cite{DoHo}, D. Goldfeld and J. Hoffstein used a double Dirichlet series in their work by treating the variables separately. They applied the theory of Eisenstein series of metaplectic type to obtain analytic continuation of the series in one variable. It was later pointed out by A. Diaconu, D. Goldfeld and J. Hoffstein in \cite{DGH} that there are many advantages in viewing multiple Dirichlet series as functions of several complex variables.  From this point of view, much progress has been made towards understanding analytic properties of various multiple Dirichlet series in \cite{DGH}, including a result on third moment of central values of the family of quadratic Dirichlet $L$-functions. \newline

  For any integer $m \equiv 0, 1 \pmod 4$, let $\chi^{(m)}=\leg {m}{\cdot}$ be the Kronecker symbol defined on \cite[p. 52]{iwakow}.  As usual, $\zeta(s)$ is the Riemann zeta function. For any $L$-function, we write $L^{(c)}$ (resp. $L_{(c)}$) for the function given by the Euler product defining $L$ but omitting those primes dividing (resp. not dividing) $c$. We reserve the letter $p$ for a prime throughout the paper and we  write $L_p$ for $L_{(p)}$ for simplicity. In \cite{Blomer11}, V. Blomer obtained meromorphic continuation to the whole $\mc^2$ for the double Dirichlet series given by
\begin{align*}
 \zeta^{(2)}(2s+2w-1)\sum_{(d,2)=1}\frac{L(s, \chi^{(4d)}\psi)\psi'(d)}{d^w}.
\end{align*}
The primary goal of \cite{Blomer11} is to establish a subconvexity bound of the above double series.  Its analytic properties actually also allow one to evaluate the smoothed first moment of $L(1/2, \chi^{(4d)})$ given by
 \begin{align}
\label{ZetWithCharacters}
  \sum_{(d,2)=1}L(\tfrac 12, \chi^{(4d)})w \bfrac {d}X,
\end{align}
  where $w(t)$ is a non-negative Schwartz function. Under generalized Riemann hypothesis (GRH) and arguing in a manner similar to \eqref{Integral for all characters} below, one is able to obtain an asymptotical formula for the expression above with the error term being the conjectured size $O(X^{1/4+\varepsilon})$.   \newline

  The success of Blomer's method relies on obtaining enough functional equations for the underlying double Dirichlet series. However, it is generally a challenging task to establish meromorphic continuation of other multiple Dirichlet series to the entire complex space this way in order to study the corresponding first moment. Thus, it is desirable to seek for an alternative approach to circumvent this difficulty.  For this, we note that M. \v Cech \cite{Cech1} investigated the $L$-functions ratios conjecture for the case of quadratic Dirichlet $L$-functions using multiple Dirichlet series.  The advantage of this method is that instead of pursuing meromorphic continuation to the entire complex space for the multiple Dirichlet series involved as done in other works, one makes a crucial use of the functional equation of a general (not necessarily primitive) quadratic Dirichlet $L$-function \cite[Proposition 2.3]{Cech1} to extend the multiple Dirichlet series under consideration to a suitable large region for the purpose of the investigation.   \newline

 Motivated by the work of \v Cech, we adapt the approach in \cite{Cech1} to assess the first moment of central values of
a family of quadratic Dirichlet $L$-functions. To state our result, we write $\chi_n$ for the quadratic character $\left(\frac {\cdot}{n} \right)$ for an odd, positive integer $n$.  By the quadratic reciprocity law, $L^{(2)}(s, \chi_n)=L(s,  \chi^{(4n)})$ (resp. $L(s, \chi^{(-4n)})$) if $n \equiv 1 \pmod 4$ (resp. $n \equiv -1 \pmod 4$).  Notice that one can factor every such $m$ uniquely into $m=dl^2$ so that $d$ is a fundamental discriminant, i.e. $d$ is either square-free and $d \equiv 1 \pmod 4$ or $d=4n$ with $n \equiv 2,3 \pmod 4$ and square-free.  It is known (see \cite[Theorem 9.13]{MVa1}) that every primitive quadratic Dirichlet character is of the form $\chi^{(d)}$ for some fundamental discriminant $d$. For such $d$, it follows from \cite[Theorem 4.15]{iwakow} that the function $L(s, \chi^{(d)})$ has an analytic continuation to the entirety of $\comc$.  Thus the same can be said of $L^{(2)}(s,  \chi_n)$. \newline
	
In this paper, we evaluate asymptotically the first moment of the family of quadratic Dirichlet $L$-functions $L^{(2)}(s, \chi_n)$ averaged over all odd, positive $n$. Our main result is as follows.
\begin{theorem}
\label{Theorem for all characters}
Under the notation as above and the truth of GRH, suppose that $w(t)$ is a non-negative Schwartz function and $\widehat w(s)$ is its Mellin transform.  For $1/2>\Re(\alpha)>0$ and any $\varepsilon>0$, we have
\begin{align}
\label{Asymptotic for ratios of all characters}
\begin{split}	
\sum_{\substack{(n,2)=1}}L^{(2)}(\tfrac{1}{2}+\alpha, \chi_{n}) w \bfrac {n}X =&  X\M w(1)\frac {\zeta(1+2\alpha)}{\zeta(2+2\alpha)}\frac {1-2^{-1-2\alpha}}{2(1-2^{-2-2\alpha})}+X^{1-\alpha}\M w(1-\alpha)\frac{\pi^{\alpha}\Gamma (1/2-\alpha)\Gamma (\frac {\alpha}2)}{\Gamma(\frac{1-\alpha}2)\Gamma (\alpha)}\frac{\zeta(1-2\alpha)}{\zeta(2)} \frac{2^{2\alpha}}{6} \\
& \hspace*{3cm} +O\lz(1+|\alpha|)^{5+\varepsilon}X^{1/4+\varepsilon}\pz.
\end{split}
\end{align}
\end{theorem}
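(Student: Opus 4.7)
The plan is to recover the sum of interest via Mellin inversion from the double Dirichlet series
\begin{align*}
Z(s,u) = \sum_{(n,2)=1} L^{(2)}(s, \chi_n)\, n^{-u},
\end{align*}
so that
\begin{align*}
\sum_{(n,2)=1} L^{(2)}(\tfrac{1}{2}+\alpha, \chi_n)\, w\!\left(\tfrac{n}{X}\right) = \frac{1}{2\pi i}\int_{(c)} Z(\tfrac{1}{2}+\alpha, u)\, \widehat{w}(u)\, X^{u}\, du
\end{align*}
for $c$ large. The proof reduces to obtaining meromorphic continuation of $u \mapsto Z(1/2+\alpha, u)$ to a region containing $\Re(u) = 1/4+\varepsilon$, identifying the poles crossed upon shifting the contour, and bounding the residual integral.

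For the analytic continuation I would adapt \v{C}ech's strategy from \cite{Cech1}. Expanding $L^{(2)}(s, \chi_n) = \sum_{(m,2)=1}\chi_n(m)\, m^{-s}$, writing $m = m_1 m_2^2$ with $m_1$ squarefree, and interchanging the $m$- and $n$-summations, quadratic reciprocity converts the inner character sum $\sum_n \chi_n(m_1)\, n^{-u}$ into (up to sign twists and local corrections at $2$) the $L$-function $L^{(2)}(u, \chi_{m_1})$. Combined with the $m_2^2$-sum, which contributes a $\zeta_{(2)}(2s+2u-1)$-type factor, this yields a representation of $Z(s,u)$ convergent in a suitable half-plane in $u$. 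To reach the region $\Re(u) \approx 1/4$, I would apply the functional equation of \cite[Proposition 2.3]{Cech1} for the non-primitive quadratic $L$-function $L^{(2)}(s, \chi_n)$, which produces a dual representation symmetrizing the roles of $s$ and $1-s$; together with the above half-plane continuation this extends $Z(1/2+\alpha, u)$ meromorphically to $\Re(u) \ge 1/4+\varepsilon$, with only two poles in the strip of interest.

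Shifting the $u$-contour from $\Re(u) = c$ down to $\Re(u) = 1/4+\varepsilon$ crosses two simple poles. The first lies at $u = 1$, arising from the $\zeta_{(2)}(2s+2u-1)$ factor produced by the $m_2^2$-sum; evaluating its residue at $s = 1/2+\alpha$ and incorporating the $2$-adic local corrections from $(n,2)=(m,2)=1$ gives
\begin{align*}
X\, \widehat{w}(1)\, \frac{\zeta(1+2\alpha)}{\zeta(2+2\alpha)}\, \frac{1-2^{-1-2\alpha}}{2(1-2^{-2-2\alpha})}.
\end{align*}
The second pole lies at $u = 1-\alpha$ and comes from the $\zeta$-factor in the dual piece produced by the functional equation (the shift by $\alpha$ arises because the FE replaces $s$ by $1-s$, so the dual DDS variable becomes $u+s-1/2 = u+\alpha$). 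Collecting the Gamma factors, Gauss sum and powers of $\pi$ from \cite[Proposition 2.3]{Cech1}, the residue yields
\begin{align*}
X^{1-\alpha}\, \widehat{w}(1-\alpha)\, \frac{\pi^{\alpha}\, \Gamma(1/2-\alpha)\, \Gamma(\alpha/2)}{\Gamma((1-\alpha)/2)\, \Gamma(\alpha)}\, \frac{\zeta(1-2\alpha)}{\zeta(2)}\, \frac{2^{2\alpha}}{6}.
\end{align*}

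The principal technical obstacle is the bound $O((1+|\alpha|)^{5+\varepsilon} X^{1/4+\varepsilon})$ for the remaining integral on $\Re(u) = 1/4+\varepsilon$. Here GRH is essential: it supplies the Lindel\"of-strength estimate $L(\sigma+it, \chi) \ll ((1+|t|)\, q)^{\varepsilon}$ for $\sigma \ge 1/2$, which provides uniform control of the $L$-values in the dual series along the contour. Combined with the rapid decay of $\widehat{w}(u)$ in vertical strips (Schwartz input), this gives the required $X^{1/4+\varepsilon}$ saving. The polynomial factor $(1+|\alpha|)^{5+\varepsilon}$ is then produced by Stirling's formula applied to the ratio of Gamma factors appearing in the functional equation and in the residues at the two poles, together with careful bookkeeping of the $\alpha$-dependence of the Gauss-sum and local factors at $2$ emerging from \cite[Proposition 2.3]{Cech1}.
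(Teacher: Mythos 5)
Your overall skeleton (Mellin inversion, meromorphic continuation of the double Dirichlet series, contour shift to $\Re(u)=1/4+\varepsilon$, two residues giving the two main terms, GRH controlling the residual integral) matches the paper, and your final residue expressions and pole locations $u=1$ and $u=1-\alpha$ are right. But the mechanism you give for the continuation and for the pole at $u=1$ is not correct. First, the interchange that underlies the paper's argument needs no quadratic reciprocity at all: with $\chi_n=\leg{\cdot}{n}$ and $\chi^{(4m)}=\leg{4m}{\cdot}$ one has $\chi_n(m)=\chi^{(4m)}(n)$ for odd positive $n$, so $\sum_n L^{(2)}(w,\chi_n)n^{-s}=\sum_m L(s,\chi^{(4m)})m^{-w}$ is a purely formal rearrangement. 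Invoking reciprocity to turn $\sum_n\chi_n(m_1)n^{-u}$ into $L^{(2)}(u,\chi_{m_1})$ instead produces a Blomer-style near-symmetry between the two variables; the paper explicitly remarks that this symmetrized route (for primitive characters) would give a \emph{weaker} error term, and it is precisely to avoid building the full system of functional equations on $\mathbb{C}^2$ that the paper applies \v{C}ech's Proposition~2.3 to the non-primitive $L(s,\chi^{(4m)})$, viewed as a character modulo $4m$, obtaining a dual series $C(1-s,s+w)$ in Gauss sums which is then controlled via the Soundararajan--Young decomposition $q=q_1q_2^2$.

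Second, your attribution of the pole at $u=1$ to a ``$\zeta_{(2)}(2s+2u-1)$ factor produced by the $m_2^2$-sum'' is both incorrect and internally inconsistent: at $s=1/2+\alpha$ such a factor would be singular at $u=1/2-\alpha$, not $u=1$. In the paper's normalization the $m=\square$ terms of $\sum_m L(s,\chi^{(4m)})/m^w$ give $A_1(s,w)=\zeta(s)\zeta(2w)P(s,w)$, and the pole is the simple pole of $\zeta(s)$ at $s=1$ (which is $u=1$ in your variables); the $\zeta(2w)$ factor plays no role since $\Re(w)=1/2+\Re(\alpha)>1/2$. Finally, a smaller point: GRH is used in the paper chiefly for the lower bound $|\zeta(2w-1)|^{-1}\ll|w|^{\varepsilon}$ appearing in the denominators inside $D_i$, while the $L$-values in the numerator are handled unconditionally via convexity and the Heath--Brown large sieve (Lemma~\ref{lem:2.3}); the paper deliberately avoids invoking Lindel\"of so as to extract an unconditional $O(X^{1/2+\varepsilon})$ variant. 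Your plan would need these pieces repaired before the contour shift and error analysis can actually be carried out.
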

	
  Notice that the error term in \eqref{Asymptotic for ratios of all characters} is uniform for $\alpha$, we can therefore take the limit $\alpha \rightarrow 0^+$ to deduce the following asymptotic formula for the smoothed first moment of central values of the family of quadratic Dirichlet $L$-functions under consideration.
\begin{corollary}
\label{Thmfirstmomentatcentral}
		With the notation as above and assuming the truth of GRH, we have, for any $\varepsilon>0$,
\begin{align}
\label{Asymptotic for first moment at central}
\begin{split}			
	& 	\sum_{\substack{(n,2)=1}}L^{(2)}(\tfrac{1}{2}, \chi_{n}) w \bfrac {n}X  = XQ(\log X)+O\lz  X^{1/4+\varepsilon}\pz.
\end{split}
\end{align}
  where $Q$ is a linear polynomial whose coefficients depend only on the absolute constants and $\M w(1)$ and $\M w'(1)$.
\end{corollary}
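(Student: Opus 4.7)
The approach is to pass to the limit $\alpha \to 0^+$ in Theorem \ref{Theorem for all characters}. On the left-hand side of \eqref{Asymptotic for ratios of all characters}, each summand $L^{(2)}(\tfrac12+\alpha,\chi_n)$ is continuous in $\alpha$ and converges to $L^{(2)}(\tfrac12,\chi_n)$ as $\alpha \to 0^+$. Under GRH one has the Lindel\"of-quality bound $|L^{(2)}(\tfrac12+\alpha,\chi_n)| \ll n^{\varepsilon}$ uniformly for $\alpha$ in a compact neighbourhood of $0$ (the potentially principal case $n$ a square reduces to $\zeta$ times finitely many Euler factors); together with the Schwartz decay of $w(n/X)$ this provides an absolutely summable envelope, and dominated convergence passes the limit termwise to give $\sum_{(n,2)=1}L^{(2)}(\tfrac12,\chi_n)w(n/X)$. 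The error term $O((1+|\alpha|)^{5+\varepsilon}X^{1/4+\varepsilon})$ is uniform for $\alpha$ near $0$, so it contributes the $O(X^{1/4+\varepsilon})$ in \eqref{Asymptotic for first moment at central}.

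The substantive step is to take the limit of the two main terms on the right. Each is individually singular at $\alpha=0$: the first contains the simple pole of $\zeta(1+2\alpha)$, and the second contains the pole of $\zeta(1-2\alpha)$. Note that the ratio $\Gamma(\alpha/2)/\Gamma(\alpha)$ tends to the \emph{finite} value $2$, as both gamma factors share a simple pole at $0$ with matching leading residues. A direct evaluation of the residues shows that the first main term has principal part $+X\widehat w(1)/(6\zeta(2)\alpha)$ and the second has principal part $-X\widehat w(1)/(6\zeta(2)\alpha)$, so the singularities cancel; this is forced by the continuity of the left-hand side at $\alpha=0$, and serves as a useful sanity check on the formula.

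To extract the finite limit $XQ(\log X)$, the plan is to Laurent-expand each main term to order $\alpha^0$ using $\zeta(1\pm 2\alpha)=\pm\tfrac{1}{2\alpha}+\gamma+O(\alpha)$, $X^{1-\alpha}=X(1-\alpha\log X+O(\alpha^2))$, $\widehat w(1-\alpha)=\widehat w(1)-\alpha\widehat w'(1)+O(\alpha^2)$, together with the standard Taylor expansions of the Gamma and Euler-product factors about $\alpha=0$. Summing the $\alpha^0$-coefficients of the two main terms, the cross-term between the $-\alpha\log X$ in $X^{1-\alpha}$ and the residue of $\zeta(1-2\alpha)$ produces the linear-in-$\log X$ contribution, with coefficient $\widehat w(1)/(6\zeta(2))$, while the remaining pieces assemble into the constant-in-$\log X$ coefficient, which depends on $\widehat w(1)$, $\widehat w'(1)$, $\gamma$, $\log 2$, and $\zeta'(2)/\zeta(2)$. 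This yields the polynomial $Q$ of the claim. The main---indeed only---obstacle is the bookkeeping of these Laurent coefficients; no further analytic input beyond Theorem \ref{Theorem for all characters} is needed.
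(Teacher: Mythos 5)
Your proposal is correct and takes the same route as the paper, which also derives the corollary from Theorem \ref{Theorem for all characters} by letting $\alpha\to 0^+$ via the uniformity of the error term, leaving the Laurent-expansion bookkeeping implicit; you supply those details, and your identification of the cancelling principal parts $\pm X\widehat w(1)/(6\zeta(2)\alpha)$ and of the $\log X$ coefficient $\widehat w(1)/(6\zeta(2))$ is correct. (One small imprecision: the residues of $\Gamma(\alpha/2)$ and $\Gamma(\alpha)$ at $0$ are $2$ and $1$ respectively, not equal, but this is exactly why the ratio tends to $2$ as you assert.)
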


 Note that our error term above is consistent with the conjecture size given in \cite{DoHo}. The explicit expression of $Q$ is omitted here as our main focus is the error term. The proof of Theorem \ref{Theorem for all characters} requires one to obtain meromorphic continuation of certain double Dirichlet series, which we get by making a crucial use of the functional equation of a general quadratic Dirichlet $L$-function in \cite[Proposition 2.3]{Cech1} to convert the original double Dirichlet series to its dual series which we carefully analyze using the ideas of K. Soundararajan and M. P. Young in \cite{S&Y}. \newline

We remark here that our proof of Theorem \ref{Theorem for all characters} implies that \eqref{Asymptotic for first moment at central} holds with the error term $O\lz  X^{1/2+\varepsilon}\pz$ unconditionally. It may also be applied to study the first moment of the family of quadratic Dirichlet $L$-functions given in \eqref{ZetWithCharacters}.  For this reason, we have included a large sieve result (Lemma~\ref{lem:2.3} below) which shall be applied to control the size of the $L$-values on average without the Lindel\"of hypothesis, although the latter may lead to better error terms.  We shall discuss this further after the proof of Lemma~\ref{Estimate For D(w,t)}. \newline

   Lastly, we point out that the novelty of our method in the paper is that instead of seeking for meromorphic continuation of the underlying multiple Dirichlet series to the entire complex space, we only aim to meromorphically continue such series to a region large enough for our purpose. This provides a great degree of flexibility in our treatment and can be easily adapted to investigate other problems. For example, one may study the first moment of families of primitive quadratic Dirichlet $L$-functions using the functional equation of the primitive quadratic Dirichlet $L$-functions themselves to obtain an asymptotic formula with an error term $O(X^{1/2+\varepsilon})$, recovering a result of M.P. Young \cite{Young1}.  The case for primitive quadratic Hecke $L$-functions over imaginary quadratic number fields of class number one has been implicitly worked out in \cite{G&Zhao2023} and the arguments therein carry over to Dirichlet $L$-functions as well. We also note that the analogous result of A. Florea \cite{Florea17} on the first moment of primitive quadratic $L$-functions over function fields suggests that there may be a secondary main term of size $X^{1/3}$ for the number fields case as well.  Thus, some new insights may be required for an asymptotic formula for the first moment of primitive quadratic Dirichlet $L$-functions, if one desires an error term of size $O(X^{1/4+\varepsilon})$. \newline

We end this section by making the convention that, throughout the paper, $\varepsilon$ denotes a small positive quantity that may not be the same in each appearance and the implied constants in $\ll$ and $O$ can depend on $\varepsilon$.

\section{Preliminaries}
\label{sec 2}

\subsection{Gauss sums}
\label{sec2.4}
    We write $\psi_j=\chi^{(4j)}$ for $j = \pm 1, \pm 2$ where we recall that $\chi^{(d)}=\leg {d}{\cdot} $ is the Kronecker symbol for integers $d \equiv 0, 1 \pmod 4$. Note that each $\psi_j$ is a character modulo $4|j|$.  Let $\psi_0$ stand for the primitive principal character. \newline

  Given any Dirichlet character $\chi$ modulo $n$ and any integer $q$, the Gauss sum $\tau(\chi,q)$ is defined to be
\begin{equation*}
		\tau(\chi,q)=\sum_{j\shortmod n}\chi(j)e \left( \frac {jq}n \right), \quad \mbox{where} \quad  e(z)= \exp(2 \pi i z).
\end{equation*}

For the evaluation of $\tau(\chi,q)$, we cite the following result from \cite[Lemma 2.2]{Cech1}.
\begin{lemma}
\label{Lemma changing Gauss sums}
\begin{enumerate}
\item If $l\equiv1 \pmod 4$, then
\begin{equation*}
		\tau\lz\chi^{(4l)},q\pz=
\begin{cases}
					0,&\hbox{if $(q,2)=1$,}\\
					-2\tau\lz\chi_l,q\pz,&\hbox{if $q\equiv2 \pmod 4$,}\\
					2\tau\lz \chi_l ,q\pz,&\hbox{if $q\equiv0 \pmod 4$.}
\end{cases}
\end{equation*}
			\item If $l \equiv3 \pmod 4$, then
\begin{equation*}
				\tau\lz\chi^{(4l)},q\pz=\begin{cases}
					0,&\hbox{if $2|q$,}\\
					-2i\tau\lz\chi_l,q\pz,&\hbox{if $q\equiv1 \pmod 4$,}\\
					2i\tau\lz\chi_l,q\pz,&\hbox{if $q\equiv3 \pmod 4$.}
				\end{cases}
\end{equation*}
		\end{enumerate}
\end{lemma}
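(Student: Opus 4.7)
The plan is to reduce $\tau(\chi^{(4l)}, q)$ to $\tau(\chi_l, q)$ via the Chinese Remainder Theorem together with quadratic reciprocity for the Jacobi/Kronecker symbol. Since $l$ is odd, the map $j \bmod 4l \mapsto (j \bmod 4, j \bmod l)$ is a bijection, and writing $j \equiv a \pmod 4$, $j \equiv b \pmod l$, I would use the standard factorization
\begin{equation*}
e\left(\frac{jq}{4l}\right) = e\left(\frac{a \bar{l} q}{4}\right)e\left(\frac{b \bar{4} q}{l}\right),
\end{equation*}
where $\bar{l}$ denotes the inverse of $l$ modulo $4$ and $\bar{4}$ the inverse of $4$ modulo $l$.

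Next I would express $\chi^{(4l)}(j)$ through this decomposition. Since $\chi^{(4l)}$ is a character modulo $4|l|$, it vanishes on even $j$, so only $a\in\{1,3\}$ contribute to the Gauss sum. For odd $j$ coprime to $l$, multiplicativity of the Kronecker symbol gives $\chi^{(4l)}(j)=\leg{l}{j}$, and reciprocity for the Jacobi symbol shows this equals $\chi_l(j)$ when $l\equiv 1 \pmod 4$ and $(-1)^{(j-1)/2}\chi_l(j)$ when $l\equiv 3 \pmod 4$. In either case the character value splits as a function of $a\bmod 4$ times a function of $b\bmod l$, and the Gauss sum factors as
\begin{equation*}
\tau(\chi^{(4l)}, q) = \left(\sum_{a\in\{1,3\}} \epsilon(a)\, e\!\left(\frac{a \bar{l} q}{4}\right)\right) \sum_{b\bmod l} \chi_l(b)\, e\!\left(\frac{b \bar{4} q}{l}\right),
\end{equation*}
where $\epsilon\equiv 1$ when $l\equiv 1\pmod 4$ and $\epsilon(a)=(-1)^{(a-1)/2}$ when $l\equiv 3\pmod 4$.

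The inner $b$-sum equals $\tau(\chi_l, q)$ after the substitution $b\mapsto 4b$, using $\chi_l(4)=1$ for $l$ odd. What remains is to evaluate the outer sum over $a\in\{1,3\}$ in each of the four residue classes of $q$ modulo $4$, using $\bar{l}\equiv l\pmod 4$. A direct computation of the four finite sums $e(\pm q/4)\pm e(\pm 3q/4)$ produces the explicit constants $0$, $\pm 2$, and $\pm 2i$ appearing in the two cases of the statement.

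I do not expect any significant obstacle here; the only step requiring genuine care is applying reciprocity correctly for the Jacobi symbol with composite lower argument, and then tracking the resulting signs. In particular, the factors of $\pm i$ for $l\equiv 3\pmod 4$ come directly from the $(-1)^{(j-1)/2}$ twist introduced by reciprocity, which converts the real exponential sum over $a\in\{1,3\}$ into an imaginary one.
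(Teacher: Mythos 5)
Your proof is correct, and it follows the standard route: CRT-factorize the Gauss sum into a $\bmod\ 4$ piece and a $\bmod\ l$ piece, use that $\chi^{(4l)}(j)=\leg{l}{j}$ for odd $j$, apply Jacobi reciprocity to convert $\leg{l}{j}$ into $\chi_l(j)$ up to a $(-1)^{(j-1)/2}$ twist, then normalize the $\bmod\ l$ sum via $b\mapsto 4b$ using $\chi_l(4)=1$ and evaluate the two-term $\bmod\ 4$ sum in each residue class of $q$. The paper itself does not prove this lemma but quotes it from \cite[Lemma 2.2]{Cech1}, and your argument is precisely the expected elementary proof; all the signs and factors of $i$ check out.
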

	
	Recall that for an odd positive integer $n$, we define $\chi_n=\left(\frac {\cdot}{n} \right)$. We then define an associated Gauss sum $G\lz\chi_n,q\pz$ by
\begin{align*}
\begin{split}
			G\lz\chi_n,q\pz&=\lz\frac{1-i}{2}+\leg{-1}{n}\frac{1+i}{2}\pz\tau\lz\chi_n,q\pz=\begin{cases}
				\tau\lz\chi_n,q\pz,&\hbox{if $n\equiv1 \pmod 4$,}\\
				-i\tau\lz\chi_n,q\pz,&\hbox{if $n\equiv3\pmod 4$}.
			\end{cases}
\end{split}
\end{align*}

  The advantage of $G\lz\chi_n,q\pz$ over $\tau\lz\chi_n,q\pz$ is that $G\lz\chi_n,q\pz$ is now a multiplicative function of $n$. In fact, upon denoting $\varphi(m)$ for the Euler totient function of $m$, we have the following result from \cite[Lemma 2.3]{sound1} that evaluates $G\lz\chi_n,q\pz$.
\begin{lemma}
\label{lem:Gauss}
   If $(m,n)=1$ then $G(\chi_{mn},q)=G(\chi_m,q)G(\chi_n,q)$. Suppose that $p^a$ is
   the largest power of $p$ dividing $q$ (put $a=\infty$ if $m=0$).
   Then for $k \geq 0$ we have
\begin{equation*}
		G\lz\chi_{p^k},q\pz=\begin{cases}\varphi(p^k),&\hbox{if $k\leq a$, $k$ even,}\\
			0,&\hbox{if $k\leq a$, $k$ odd,}\\
			-p^a,&\hbox{if $k=a+1$, $k$ even,}\\
			\leg{qp^{-a}}{p}p^{a}\sqrt p,&\hbox{if $k=a+1$, $k$ odd,}\\
			0,&\hbox{if $k\geq a+2$}.
		\end{cases}
\end{equation*}
\end{lemma}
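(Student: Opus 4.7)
The plan is to establish the two parts of the lemma independently: multiplicativity of $n\mapsto G(\chi_n,q)$ for coprime odd moduli, and the explicit evaluation at odd prime powers.

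For multiplicativity, given $(m,n)=1$ I would parametrise residues $j\pmod{mn}$ via the Chinese Remainder Theorem as $j\equiv mj_1+nj_2$ with $j_1\pmod n$ and $j_2\pmod m$. Complete multiplicativity of the Jacobi symbol in the top entry yields $\chi_{mn}(j)=\chi_n(m)\chi_m(n)\chi_n(j_1)\chi_m(j_2)$, while the additive character factorises as $e(jq/mn)=e(j_1q/n)e(j_2q/m)$. Summing and invoking quadratic reciprocity gives
\[
\tau(\chi_{mn},q)=(-1)^{\frac{m-1}{2}\cdot\frac{n-1}{2}}\tau(\chi_m,q)\tau(\chi_n,q).
\]
A short case check on $(m\bmod 4,\,n\bmod 4)$ then verifies that the twisting factor $\tfrac{1-i}{2}+\leg{-1}{\cdot}\tfrac{1+i}{2}$ entering the definition of $G$ behaves multiplicatively up to precisely this sign, so the two signs cancel and $G(\chi_{mn},q)=G(\chi_m,q)G(\chi_n,q)$.

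For the prime-power evaluation I set $a=v_p(q)$ and note that $\chi_{p^k}=\leg{\cdot}{p}^k$ is the principal character on units mod $p^k$ when $k$ is even and the lifted Legendre symbol when $k$ is odd. The three regimes of the statement correspond to splitting the variable $j$ according to its $p$-adic expansion. When $k\le a$, one has $e(jq/p^k)=1$, so $\tau(\chi_{p^k},q)=\sum_{(j,p)=1}\chi_{p^k}(j)$, which is $\varphi(p^k)$ or $0$ according to the parity of $k$. When $k=a+1$, writing $q=p^aq'$ with $(q',p)=1$ and $j=j_0+pj_1$ with $j_0\in\{1,\dots,p-1\}$ and $j_1\pmod{p^{k-1}}$, the inner sum over $j_1$ contributes $p^a$ and the outer sum is either $\sum_{j_0=1}^{p-1}e(j_0q'/p)=-1$ for $k$ even or the classical Legendre-symbol Gauss sum $\sum_{j_0}\leg{j_0}{p}e(j_0q'/p)=\leg{q'}{p}g_p$ for $k$ odd. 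Finally, when $k\ge a+2$, the analogous decomposition isolates an inner sum $\sum_{j_1\pmod{p^{a+1}}}e(j_1q'/p^{a+1})$, which vanishes because $(q',p)=1$.

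The only genuinely non-formal step is the subcase $k=a+1$ with $k$ odd: one invokes Gauss's evaluation $g_p=\sqrt p$ if $p\equiv 1\pmod 4$ and $g_p=i\sqrt p$ if $p\equiv 3\pmod 4$, and must then verify---again by a case analysis on $p\pmod 4$---that the factor of $i$ is absorbed correctly by the twist $\tfrac{1-i}{2}+\leg{-1}{p^k}\tfrac{1+i}{2}$ so as to produce exactly $\leg{qp^{-a}}{p}p^a\sqrt p$. This final bookkeeping of fourth roots of unity is the main obstacle, but it is entirely mechanical once the three regimes have been isolated and quadratic reciprocity is in hand.
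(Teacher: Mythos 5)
The paper does not prove this lemma at all: it is quoted verbatim from Soundararajan (cited as \cite[Lemma 2.3]{sound1}), so there is no ``paper's own proof'' to compare against. That said, your argument is essentially the standard one and, as far as I can check, it is correct. The multiplicativity step is handled exactly as one should: the Chinese Remainder Theorem gives $\tau(\chi_{mn},q)=\chi_n(m)\chi_m(n)\tau(\chi_m,q)\tau(\chi_n,q)$, quadratic reciprocity identifies $\chi_n(m)\chi_m(n)$ with $(-1)^{\frac{m-1}{2}\frac{n-1}{2}}$, and a three-case check on $(m,n)\bmod 4$ confirms that the normalising factor $\epsilon_n:=\frac{1-i}{2}+\left(\frac{-1}{n}\right)\frac{1+i}{2}$ (equal to $1$ or $-i$) satisfies $\epsilon_{mn}(-1)^{\frac{m-1}{2}\frac{n-1}{2}}=\epsilon_m\epsilon_n$, which is precisely why $G$ is multiplicative while $\tau$ is not. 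The three regimes at prime powers are also correct: when $k\le a$ the additive character trivialises and one is left with a character sum that is $\varphi(p^k)$ or $0$ by parity of $k$; when $k=a+1$ the inner sum over $j_1\pmod{p^{k-1}}$ contributes $p^a$ and the outer sum is $-1$ (Ramanujan sum) for $k$ even or $\left(\frac{q'}{p}\right)g_p$ for $k$ odd, after which $\epsilon_{p^k}g_p=\sqrt p$ in both residue classes of $p\bmod 4$; and when $k\ge a+2$ the additive character has strictly larger $p$-power period than the multiplicative one, so the inner geometric sum vanishes.

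One small slip: in the $k\ge a+2$ regime, with the decomposition $j=j_0+pj_1$, $j_1\pmod{p^{k-1}}$, the inner sum is $\sum_{j_1\pmod{p^{k-1}}}e\lz j_1q'/p^{k-a-1}\pz$, not $\sum_{j_1\pmod{p^{a+1}}}e\lz j_1q'/p^{a+1}\pz$ as you wrote; the former equals $p^a\sum_{j_1\pmod{p^{k-a-1}}}e\lz j_1q'/p^{k-a-1}\pz=0$ since $k-a-1\ge 1$ and $(q',p)=1$. This is only a bookkeeping error in the exponents and does not affect the conclusion, but you should fix the indices. Beyond that the argument is sound and matches the approach in Soundararajan's original lemma.
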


\subsection{Functional equations for Dirichlet $L$-functions}
	
	We quote the following functional equation from \cite[Proposition 2.3]{Cech1} concerning all Dirichlet characters $\chi$ modulo $n$, which plays a key role in our proof of Theorem \ref{Theorem for all characters}.
\begin{lemma}
\label{Functional equation with Gauss sums}
		Let $\chi$ be any Dirichlet character modulo $n \neq \square$ such that $\chi(-1)=1$. Then we have
\begin{equation}
\label{Equation functional equation with Gauss sums}
			L(s,\chi)=\frac{\pi^{s-1/2}}{n^s}\frac{\Gamma\bfrac{1-s}{2}}{\Gamma\bfrac {s}2} K(1-s,\chi), \quad \mbox{where} \quad K(s,\chi)=\sum_{q=1}^\infty\frac{\tau(\chi,q)}{q^s}.
\end{equation}
\end{lemma}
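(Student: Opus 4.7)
The plan is to reduce the claimed identity to Hurwitz's functional equation for the Hurwitz zeta. Since $\tau(\chi,q)$ depends only on $q\bmod n$, I would first split the series defining $K(1-s,\chi)$ by residue class to obtain
\[
K(1-s,\chi) \;=\; n^{s-1}\sum_{r=1}^{n}\tau(\chi,r)\,\zeta(1-s,r/n),
\]
then apply Hurwitz's formula
\[
\zeta(1-s,a) \;=\; \frac{\Gamma(s)}{(2\pi)^{s}}\bigl(e^{-\pi i s/2}F(a,s)+e^{\pi i s/2}F(-a,s)\bigr), \qquad F(a,s)=\sum_{m\ge1}\frac{e^{2\pi i m a}}{m^{s}},
\]
to each Hurwitz zeta in the sum.

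The crucial simplification is the parity hypothesis $\chi(-1)=1$, which upon substituting $j\mapsto -j$ in the definition of $\tau(\chi,r)$ yields $\tau(\chi,-r)=\tau(\chi,r)$. This symmetry lets me pool the two exponentials into a single factor of $2\cos(\pi s/2)$, giving
\[
K(1-s,\chi) \;=\; n^{s-1}\,\frac{2\Gamma(s)\cos(\pi s/2)}{(2\pi)^{s}}\sum_{m=1}^{\infty}\frac{1}{m^{s}}\sum_{r=1}^{n}\tau(\chi,r)\,e^{2\pi i m r/n}.
\]
By Fourier inversion on $\mathbb{Z}/n\mathbb{Z}$ applied to the defining sum of $\tau(\chi,r)$, the inner sum over $r$ equals $n\chi(-m)=n\chi(m)$, and the whole expression collapses to
\[
K(1-s,\chi) \;=\; n^{s}\,\frac{2\Gamma(s)\cos(\pi s/2)}{(2\pi)^{s}}\,L(s,\chi).
\]

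What remains is gamma-factor arithmetic. Using Legendre's duplication formula $\Gamma(s)=2^{s-1}\pi^{-1/2}\Gamma(s/2)\Gamma((s+1)/2)$ together with the reflection identity $\Gamma((1-s)/2)\Gamma((1+s)/2)=\pi/\cos(\pi s/2)$ (the latter obtained by applying $\Gamma(z)\Gamma(1-z)=\pi/\sin(\pi z)$ at $z=(1-s)/2$), one checks that
\[
\frac{2\Gamma(s)\cos(\pi s/2)}{(2\pi)^{s}} \;=\; \frac{\Gamma(s/2)}{\pi^{s-1/2}\,\Gamma((1-s)/2)},
\]
which is exactly the factor needed to rearrange the preceding display into \eqref{Equation functional equation with Gauss sums}.

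The main obstacle is analytic rather than algebraic: the half-planes of absolute convergence for $L(s,\chi)$ and $K(1-s,\chi)$ are disjoint, so the manipulations above must be read as identities of meromorphic functions and justified via the meromorphic continuation of $\zeta(s,a)$ and the corresponding decomposition $L(s,\chi)=n^{-s}\sum_j\chi(j)\zeta(s,j/n)$. The hypothesis $n\neq\square$ excludes the principal-character case (in the setting of the paper where the relevant character is $\chi^{(n)}$), ensuring that the pole of each $\zeta(s,j/n)$ at $s=1$ is cancelled by $\sum_j\chi(j)=0$ and that the identity extends holomorphically to all of $\mathbb{C}$.
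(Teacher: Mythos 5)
The paper does not supply its own proof of this lemma; it is quoted verbatim from \cite[Proposition 2.3]{Cech1}. Your derivation is correct and is the standard route to the functional equation of a (not necessarily primitive) even Dirichlet $L$-function: split $K(1-s,\chi)$ over residue classes of $q$ modulo $n$ to obtain Hurwitz zeta functions, invoke Hurwitz's formula, use the evenness $\tau(\chi,-r)=\chi(-1)\tau(\chi,r)=\tau(\chi,r)$ together with the periodicity of $F(a,s)$ in $a$ to combine the two exponentials into $2\cos(\pi s/2)$, apply the discrete Fourier inversion $\sum_{r\bmod n}\tau(\chi,r)e(mr/n)=n\chi(-m)=n\chi(m)$, and finish with the Legendre duplication and Euler reflection formulas to collapse $2\Gamma(s)\cos(\pi s/2)/(2\pi)^s$ into $\Gamma(s/2)/\bigl(\pi^{s-1/2}\Gamma((1-s)/2)\bigr)$. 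This is almost certainly the argument in \cite{Cech1} as well, and your closing remarks on the analytic continuation are the right way to justify the formal manipulation.

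One small clarification worth making explicit: as literally stated, the hypothesis ``$n\neq\square$'' does not by itself force an arbitrary even Dirichlet character modulo $n$ to be non-principal (e.g.\ $n=6$ is a non-square for which the principal character is even). You note parenthetically that in the intended setting the character is $\chi^{(n)}$ (or, in the paper's application, $\chi^{(4m)}$ with $m\neq\square$), for which $n\neq\square$ is equivalent to non-principality. That equivalence is what actually delivers $\sum_{j\bmod n}\chi(j)=0$ and hence the cancellation of the simple poles of the individual $\zeta(s,r/n)$ at $s=1$; it would be cleaner to state non-principality as the operative hypothesis and observe that the $n\neq\square$ condition in the lemma encodes exactly that in the relevant quadratic setting.
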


\subsection{Bounding $L$-functions}

 For a fixed quadratic character $\psi$ modulo $n$, let $\widehat{\psi}$ be the primitive character that induces $\psi$ so that we have $\widehat\psi=\chi^{(d)}$ for some fundamental discriminant $d|n$ (see \cite[Theorem 9.13]{MVa1}). We gather in this section certain estimations on $L(s, \psi)$ that are necessary in the proof of Theorem \ref{Theorem for all characters}. Most of the estimations here are unconditional, except for the following one, which asserts that when $\Re(s) \geq 1/2+\varepsilon$ for any $\varepsilon>0$,  we have by \cite[Theorem 5.19]{iwakow} that under GRH,
\begin{align}
\label{PgLest1}
\begin{split}
& \big| L( s,  \widehat\psi )\big |^{-1} \ll |sn|^{\varepsilon}.
\end{split}
\end{align}

  Write $n=n_1n_2$ uniquely such that $(n_1, d)=1$ and that $p |n_2 \Rightarrow p|d$. The above notations imply that for any integer $q$,
\begin{align}
\label{Ldecomp}
\begin{split}
 L^{(q)}( s, \psi ) =L( s,  \widehat{\psi}) \prod_{p|qn_1}\left( 1-\frac {\widehat\psi(p)}{p^s} \right).
\end{split}
\end{align}

  Observe that
\begin{align*}
 \Big |1-\frac {\widehat\psi(p)}{p^s}\Big | \leq 2p^{\max (0,-\Re(s))}.
\end{align*}

  We then deduce that
\begin{align}
\label{Lnbound}
\begin{split}
 \prod_{p|qn_1}\left( 1-\frac {\widehat\psi(p)}{p^s} \right) \ll 2^{\omega(q_1n)}(qn_1)^{\max (0,-\Re(s))} \ll (qn_1)^{\max (0,-\Re(s))+\varepsilon},
\end{split}
\end{align}
  where $\omega(n)$ denotes the number of distinct prime factors of $n$ and the last estimation above follows from the well-known bound (see \cite[Theorem 2.10]{MVa1})
\begin{align*}
   \omega(h) \ll \frac {\log h}{\log \log h}, \quad \mbox{for} \quad h \geq 3.
\end{align*}

 When $d$ is a fundamental discriminant, we recall the convexity bound for $L(s, \chi^{(d)})$ (see \cite[Exercise 3, p. 100]{iwakow}) asserts that
\begin{align}
\label{Lconvexbound}
\begin{split}
 L( s, \chi^{(d)} ) \ll
\begin{cases}
 &  \left (|d|(1+|s|) \right)^{(1-\Re(s))/2+\varepsilon}, \quad 0 \leq \Re(s) \leq 1, \\
 &  1, \quad \Re(s)>1.
\end{cases}
\end{split}
\end{align}

  To estimate $L( s,  \chi^{(d)} )$ for $\Re(s) < 0$, we note the following functional equation (see \cite[p. 456]{sound1}) for a primitive even character $\chi$.
\begin{align}
\label{fneqnquad}
  \Lambda(s, \chi^{(d)}) := \Big( \frac {|d|} {\pi} \Big)^{s/2}\Gamma \Big( \frac {s}{2} \Big)L(s, \chi^{(d)})=\Lambda(1-s,  \chi^{(d)}).
\end{align}

  Moreover, Stirling's formula (\cite[(5.113)]{iwakow}) implies that, for constants $a_0$, $b_0$,
\begin{align}
\label{Stirlingratio}
  \frac {\Gamma(a_0(1-s)+ b_0)}{\Gamma (a_0s+ b_0)} \ll (1+|s|)^{a_0(1-2\Re (s))}.
\end{align}

  We also conclude from \eqref{Lconvexbound}--\eqref{Stirlingratio} that
\begin{align}
\label{Lchidbound}
\begin{split}
   L(s, \chi^{(d)}) \ll \begin{cases}
   1 \qquad & \Re(s) >1,\\
   (|d|(1+|s|))^{(1-\Re(s))/2+\varepsilon} \qquad & 0\leq \Re(s) <1,\\
    (|d|(1+|s|))^{1/2-\Re(s)+\varepsilon} \qquad & \Re(s) < 0.
\end{cases}
\end{split}
\end{align}

 From \eqref{Ldecomp}, \eqref{Lnbound} and \eqref{Lchidbound}, we deduce that for all complex numbers $s$,
\begin{align}
\label{Lchibound1}
\begin{split}
 L^{(q)}( s, \psi ) \ll (qn_1)^{\max (0,-\Re(s))+\varepsilon}(n(1+|s|))^{\max \{1/2-\Re (s),(1-\Re(s))/2, 0 \} +\varepsilon}.
\end{split}
\end{align}

    We conclude this section by including the following large sieve result for quadratic Dirichlet $L$-functions, which is a consequence of \cite[Theorem 2]{DRHB}.
\begin{lemma} \label{lem:2.3}
 With the notation as above, let $S(X)$ denote the set of real, primitive characters $\chi$ with conductor not exceeding $X$. Then we have, for any complex number $s$  with $\Re(s) \geq 1/2$ and any $\varepsilon>0$,
\begin{align}
\label{L1estimation}
\sum_{\substack{\chi \in S(X)}} |L(s, \chi)|
\ll & X^{1+\varepsilon} |s|^{1/4+\varepsilon}.
\end{align}
\end{lemma}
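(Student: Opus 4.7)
The plan is to derive the bound via Cauchy--Schwarz from a matching second moment estimate, using Heath--Brown's quadratic large sieve inequality (\cite[Theorem 2]{DRHB}) applied to the two pieces obtained from an approximate functional equation. I will focus on the borderline case $\Re(s)=1/2$, as larger values of $\Re(s)$ follow by the same argument with weaker dependence on $|s|$.

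First, for $\chi=\chi^{(d)}\in S(X)$ with conductor $q=|d|\leq X$, I apply the standard approximate functional equation (which for even primitive $\chi$ follows from \eqref{fneqnquad}) together with a suitable smooth cutoff, obtaining a decomposition
\begin{align*}
L(s,\chi)=\sum_{n\geq 1}\frac{\chi(n)}{n^s}V_1\!\lz\frac{n}{Y}\pz+\omega_\chi\lz\frac{q}{\pi}\pz^{1/2-s}\frac{\Gamma((1-s)/2)}{\Gamma(s/2)}\sum_{n\geq 1}\frac{\chi(n)}{n^{1-s}}V_2\!\lz\frac{nY}{q(1+|s|)}\pz,
\end{align*}
where $V_1,V_2$ are smooth functions with rapid decay and $Y$ is a free parameter. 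I will take $Y=\sqrt{X(1+|s|)}$ so that both sums have effective length $\ll \sqrt{X(1+|s|)}$ for every $\chi\in S(X)$; by Stirling \eqref{Stirlingratio}, the gamma ratio is $\ll (1+|s|)^{1/2-\Re(s)}$, which equals $1$ when $\Re(s)=1/2$.

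Next, by Cauchy--Schwarz and $|S(X)|\ll X$,
\begin{align*}
\sum_{\chi\in S(X)}|L(s,\chi)|\ll X^{1/2}\lz\sum_{\chi\in S(X)}|L(s,\chi)|^2\pz^{1/2},
\end{align*}
so it suffices to show $\sum_{\chi\in S(X)}|L(s,\chi)|^2\ll X^{1+\varepsilon}(1+|s|)^{1/2+\varepsilon}$. I apply Heath--Brown's large sieve to each of the two Dirichlet polynomials from the approximate functional equation. With $a_n=n^{-s}V_i(\cdots)$ supported on $n\leq N\ll\sqrt{X(1+|s|)}$, the diagonal contribution satisfies
\begin{align*}
\sum_{\substack{n_1 n_2=\square\\ n_1,n_2\leq N}}|a_{n_1}a_{n_2}|\leq\sum_{m\leq N}\frac{\tau(m^2)}{m^{2\Re(s)}}\ll N^{\varepsilon},
\end{align*}
using $\Re(s)\geq 1/2$. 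Heath--Brown's inequality then yields
\begin{align*}
\sum_{\chi\in S(X)}\Big|\sum_n a_n\chi(n)\Big|^2\ll (XN)^{\varepsilon}(X+N)\ll X^{1+\varepsilon}(1+|s|)^{\varepsilon}+X^{1/2+\varepsilon}(1+|s|)^{1/2+\varepsilon},
\end{align*}
which is $\ll X^{1+\varepsilon}(1+|s|)^{1/2+\varepsilon}$. The contribution from the dual sum is handled identically, the factor $q^{1/2-s}$ being absorbed by $q\leq X$ and the gamma factor being bounded on $\Re(s)=1/2$; for $\Re(s)>1/2$ the saving from both factors only helps. Inserting this second-moment bound into the Cauchy--Schwarz step gives \eqref{L1estimation}.

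The main obstacle is pinning down the correct $|s|$-dependence: it must be carried through the length parameter $Y$ of the approximate functional equation and through the Stirling estimate for the gamma ratio, and then optimised against the $(X+N)$ factor from the large sieve. A secondary issue is the slightly different normalisation between the Jacobi-symbol characters $\chi_n$ appearing in Heath--Brown's inequality and the Kronecker-symbol characters $\chi^{(d)}$ parametrising $S(X)$; this is handled by the standard bijection between primitive real characters and fundamental discriminants, and reduces the result to the shape \cite[Theorem 2]{DRHB} already supplies.
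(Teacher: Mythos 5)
Your proof is correct but takes a genuinely different route from the paper's two-line argument. The paper simply quotes Heath--Brown's fourth-moment estimate $\sum_{\chi\in S(X)}|L(s,\chi)|^4\ll(X|s|)^{1+\varepsilon}$ (which \emph{is} the content of \cite[Theorem 2]{DRHB}) and applies H\"older with exponents $(4/3,4)$: $\sum|L|\leq|S(X)|^{3/4}\bigl(\sum|L|^4\bigr)^{1/4}\ll X^{3/4}(X|s|)^{1/4+\varepsilon}$. You instead rebuild a second-moment bound from scratch via the approximate functional equation and Heath--Brown's quadratic large sieve, then apply Cauchy--Schwarz. One citation must be corrected: the inequality you actually invoke --- the one with the $(Q+N)$ factor and the $n_1n_2=\square$ diagonal --- is \cite[Theorem 1]{DRHB}, not Theorem 2; Theorem 2 is precisely the fourth-moment result the paper uses as a black box. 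With that fix your argument is sound, and in fact slightly sharper: your $|s|$-tracking shows $\sum_{\chi\in S(X)}|L(s,\chi)|\ll X^{1+\varepsilon}$ whenever $|s|\leq X$, i.e.\ no $|s|^{1/4}$-loss in that range. The trade-off is that the paper's route is shorter because Heath--Brown already did the AFE-plus-large-sieve work inside his Theorem 2, whereas you re-perform a lighter, second-moment version of that work to hit the weaker first-moment target directly; either route gives Lemma~\ref{lem:2.3}, and only the crude $|s|^{O(1)}$-dependence is ever needed in Section~\ref{Section bound in vertical strips}.
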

\begin{proof}
From \cite[Theorem 2]{DRHB}, we get
\begin{align*}
\sum_{\substack{\chi \in S(X)}} |L(s, \chi)|^4
\ll & (X|s|)^{1+\varepsilon}.
\end{align*}
The lemma now follows from the above and H\"older's inequality.
\end{proof}

\subsection{Some results on multivariable complex functions}
	
   We gather here some results from multivariable complex analysis. We begin with the notation of a tube domain.
\begin{defin}
		An open set $T\subset\mc^n$ is a tube if there is an open set $U\subset\mr^n$ such that $T=\{z\in\mc^n:\ \Re(z)\in U\}.$
\end{defin}
	
   For a set $U\subset\mr^n$, we define $T(U)=U+i\mr^n\subset \mc^n$.  We quote the following Bochner's Tube Theorem \cite{Boc}.
\begin{theorem}
\label{Bochner}
		Let $U\subset\mr^n$ be a connected open set and $f(z)$ a function holomorphic on $T(U)$. Then $f(z)$ has a holomorphic continuation to the convex hull of $T(U)$.
\end{theorem}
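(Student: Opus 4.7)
The theorem is Bochner's classical tube theorem, cited above from \cite{Boc}. Although it is typically invoked as a black box, one can sketch a proof in three stages: reduce to extending across a single line segment, perform this local extension using an integral representation, and patch the local extensions together via uniqueness of analytic continuation.

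First I would reduce the problem. A short computation shows that $T(\mathrm{conv}(U))$ equals the convex hull of $T(U)$ in $\mc^n$, since $\sum t_i(u_i+iy_i)=\sum t_iu_i+i\sum t_iy_i$, so the task is to extend $f$ holomorphically across each $p\in\mathrm{conv}(U)\setminus U$. Such a $p$ lies on an open segment $(a,b)$ with $a,b\in U$. By fixing all coordinates transverse to the direction $b-a$, the problem reduces to a statement in $\mc^2$: the function $f$ is holomorphic on $T(V)$ for some connected open $V\subset\mr^2$ containing $a$ and $b$, and we must extend it to a neighborhood of $T([a,b])$.

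The heart of the argument is the local extension in $\mc^2$. I would define an extension of $f$ near a point $p+iy_0$ with $p\in(a,b)$ by a Cauchy-type formula
\begin{equation*}
\widetilde f(z) \;=\; \frac{1}{(2\pi i)^2}\int_{\Gamma}\frac{f(\zeta)\,d\zeta_1\wedge d\zeta_2}{(\zeta_1-z_1)(\zeta_2-z_2)},
\end{equation*}
where $\Gamma$ is a real $2$-cycle in $T(V)$ winding once around $z$. Because the imaginary directions in $T(V)$ are unbounded, one can choose $\Gamma$ to thread around the gap $p\notin V$ by escaping into large imaginary parts; this is the geometric content of the tube structure. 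Cauchy's theorem yields independence of $\Gamma$, while Morera's theorem yields holomorphicity of $\widetilde f$ in $z$.

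Finally, I would patch: by the identity theorem on the connected open set $T(U)$, any two local extensions agree on $T(U)$ and hence combine into a single-valued holomorphic function on $T(\mathrm{conv}(U))$. The main obstacle is the construction of the cycle $\Gamma$, which requires a careful topological argument exploiting the imaginary translation invariance of $T(V)$ to encircle $z$ without leaving $T(V)$; equivalently, one may derive the extension from an edge-of-the-wedge theorem, itself proved by such integral-geometric techniques.
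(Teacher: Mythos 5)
The paper does not prove this theorem; it is quoted as a black box from \cite{Boc}, so there is no internal proof to compare your sketch against. Your outline does capture the general shape of the classical integral-representation proofs of Bochner's tube theorem, but as written it contains one slip and leaves the crucial step unresolved.

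The slip is in the dimensional reduction. Fixing \emph{all} coordinates transverse to $b-a$ leaves a one-complex-dimensional slice, and in $\mc^1$ there is no compulsory extension phenomenon at all; what you want is to slice by a real two-plane containing the segment $[a,b]$, leaving one transverse direction free, which does land you in $\mc^2$. Even then the reduction is not automatic: the two-plane may meet $U$ in a disconnected set, so $a$ and $b$ need not lie in the same component of the slice $V$, and one needs an additional argument (choice of slice, or an inductive/covering scheme as in H\"ormander's treatment via triangles) before the $\mc^2$ lemma can be invoked and then shown to vary holomorphically in the suppressed parameters.

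The genuine gap is the construction of the cycle $\Gamma$. The form $\dfrac{d\zeta_1\wedge d\zeta_2}{(\zeta_1-z_1)(\zeta_2-z_2)}$ is closed on $(\mc\setminus\{z_1\})\times(\mc\setminus\{z_2\})$, so the integral only depends on the homology class of $\Gamma$, and the standard Cauchy formula corresponds to the distinguished-boundary torus. The entire content of the theorem is the assertion that, for $z$ in the tube over the convex hull but outside $T(V)$, one can find a $2$-cycle inside $T(V)$ that is homologous to that torus in the ambient punctured product. You flag this as ``the main obstacle'' and defer it, but this is not a technicality that can be deferred: it is the theorem. Without producing $\Gamma$ (or replacing this step by a concrete mechanism such as a Hartogs figure argument, a continuity-of-discs argument, or a power-series/plurisubharmonic-envelope argument), the sketch does not constitute a proof of the statement.
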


 We denote the convex hull of an open set $T\subset\mc^n$ by $\widehat T$.  Our next result is \cite[Proposition C.5]{Cech1} on the modulus of holomorphic continuations of multivariable complex functions.
\begin{prop}
\label{Extending inequalities}
		Assume that $T\subset \mc^n$ is a tube domain, $g,h:T\rightarrow \mc$ are holomorphic functions, and let $\tilde g,\tilde h$ be their holomorphic continuations to $\widehat T$. If  $|g(z)|\leq |h(z)|$ for all $z\in T$ and $h(z)$ is nonzero in $T$, then also $|\tilde g(z)|\leq |\tilde h(z)|$ for all $z\in \widehat T$.
\end{prop}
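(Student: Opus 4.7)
My plan is to bypass any direct maximum-modulus argument on $\widehat T$ and instead reduce the inequality to a zero-freeness statement, exploiting Bochner's tube theorem (Theorem~\ref{Bochner}) applied twice: once to obtain the extensions $\tilde g, \tilde h$ themselves, and once more to extend the reciprocal of a suitable one-parameter family of linear combinations of $g$ and $h$.

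To this end, I would fix $\lambda\in\mc$ with $|\lambda|<1$. Since $h$ is nowhere zero on $T$ and $|g(z)|\leq|h(z)|$ there, the triangle inequality gives
\[
|h(z)-\lambda g(z)|\;\geq\;|h(z)|-|\lambda|\,|g(z)|\;\geq\;(1-|\lambda|)\,|h(z)|\;>\;0
\]
for every $z\in T$. Consequently the map $z\mapsto 1/(h(z)-\lambda g(z))$ is holomorphic on $T$, and by Bochner's theorem it admits a holomorphic continuation, call it $G_\lambda$, to the convex hull $\widehat T$.

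The identity $G_\lambda(z)\bigl(h(z)-\lambda g(z)\bigr)=1$ holds on $T$ by construction, while both sides extend to holomorphic functions on the connected open set $\widehat T$, the extended left-hand side being $G_\lambda(z)\bigl(\tilde h(z)-\lambda \tilde g(z)\bigr)$. The several-variable identity theorem then forces
\[
G_\lambda(z)\bigl(\tilde h(z)-\lambda \tilde g(z)\bigr)=1\qquad(z\in\widehat T),
\]
so that $\tilde h-\lambda\tilde g$ is nowhere vanishing on $\widehat T$ whenever $|\lambda|<1$. Finally, for any fixed $z_0\in\widehat T$, the desired inequality $|\tilde g(z_0)|\leq|\tilde h(z_0)|$ is trivial when $\tilde g(z_0)=0$, and otherwise the ratio $\tilde h(z_0)/\tilde g(z_0)$ must avoid every $\lambda$ with $|\lambda|<1$, forcing $|\tilde h(z_0)/\tilde g(z_0)|\geq 1$.

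The one delicate point is making sure the hypotheses of Bochner's theorem really apply to each reciprocal $1/(h-\lambda g)$: this requires $\widehat T$ to be a connected tube, which follows from the fact that the convex hull of a (connected) tube $T(U)$ is the tube $T(\widehat U)$ whose base $\widehat U$ is convex and therefore connected. Beyond this verificational check, no growth estimate or Phragm\'en--Lindel\"of style argument is needed; the scheme is essentially a transcription into the tube setting of the elementary scalar fact that a holomorphic function satisfies $|f|\leq 1$ precisely when $1/(1-\lambda f)$ is holomorphic for every $|\lambda|<1$.
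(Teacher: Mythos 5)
The paper does not supply its own proof of this proposition; it is quoted directly as \cite[Proposition~C.5]{Cech1}, so there is no internal argument to compare against. Your proof is correct and self-contained. Reducing the modulus inequality to the non-vanishing of $h-\lambda g$ on $T$ for each $|\lambda|<1$, transporting that non-vanishing across the Bochner continuation by passing to the reciprocal $1/(h-\lambda g)$ and invoking the identity theorem on the connected set $\widehat T$, and finally reading off $|\tilde h(z_0)/\tilde g(z_0)|\ge 1$ from the fact that this ratio omits the open unit disc, is exactly the right mechanism. In particular it sidesteps any maximum-principle or Phragm\'en--Lindel\"of argument, which would be awkward on the unbounded set $\widehat T$, in favour of the purely algebraic observation that holomorphic continuation preserves invertibility. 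The two connectedness points you flag at the end are precisely the hypotheses one must check (so that Theorem~\ref{Bochner} and the several-variable identity theorem apply), and both hold: a tube domain has connected base, and a convex set is connected. I find no gap.
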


\section{Proof of Theorem \ref{Theorem for all characters}}

  For $\Re(s)$, $\Re(w)$ sufficiently large, define
\begin{align}
\label{Aswzexp}
\begin{split}
A(s,w)=& \sum_{\substack{(n,2)=1 }}\frac{L^{(2)}(w, \chi_{n})}{n^s}
=\sum_{\substack{(nm,2)=1}}\frac{\chi_n(m)}{m^wn^s}= \sum_{\substack{(m,2)=1}}\frac{L( s,\chi^{(4m)} )}{m^w}.
\end{split}
\end{align}

We shall develop some analytic properties of $A(s,w)$, necessary in establishing Theorem \ref{Theorem for all characters}.  Before delving into this analysis, we make the following observation.  In \cite[Lemma 2]{Blomer11}, Blomer developed a relation between $\mathbf{Z}(s,w)$ (which may be regarded of as an analogue of our $A(s,w)$ in \eqref{Aswzexp}) and $\mathbf{Z}(w,s)$.  It is thus a natural question to ask if such a relation is possible for $A(s,w)$ and $A(w,s)$ using quadratic reciprocity.  As remarked in the introduction of this paper, that the advantage of considering all characters is that we can convert $A(s,w)$ to a dual sum and make use of the properties of the dual sum to obtain a good error term.  If we consider primitive $L$-functions, then it would be possible to obtain a functional equation relating $A(s,w)$ and $A(w,s)$, but then the error term would be much larger.
	
\subsection{First region of absolute convergence of $A(s,w)$}

   Using the first equality in the definition for $A(s,w)$ in \eqref{Aswzexp}, we get
\begin{align} \label{Abound}
\begin{split}
		A(s,w)=& \sum_{\substack{(n,2)=1 }}\frac{L^{(2)}(w, \chi_{n})}{n^s}
= \sum_{\substack{(h,2)=1}}\frac {1}{h^{2s}}\sumstar_{\substack{(n,2)=1}}\frac{L^{(2)}(w,  \chi_{n})\prod_{p | h}(1-\chi_{n}(p)p^{-w}) }{n^s} \\
=& \zeta^{(2)}(2s)\sumstar_{\substack{(n,2)=1}}\frac{L^{(2)}(w,  \chi_{n})}{n^sL^{(2)}(2s+w,  \chi_{n})},
\end{split}
\end{align}
where $\sum^*$ henceforth denotes the sum over square-free integers.  Note that the last equality follows by using the identity (which is obtained
by writing the corresponding series into an Euler product)
\[ \sum_{\substack{(h,2)=1}}\frac {1}{h^{2s}}\prod_{p|h} \left( 1- \frac{\chi_{n}(p)}{p^w} \right) = \frac {\zeta^{(2)}(2s)}{L^{(2)}(2s+w,  \chi_{n})} .\]

Write $\widetilde \chi_n$ for the primitive Dirichlet character that induces $\chi^{(n)}$ (resp. $\chi^{(-n)}$) for $n \equiv 1 \pmod 4$  (resp. for $n \equiv -1 \pmod 4$). Recall that we have $L^{(2)}(w, \chi_n)=L(w, \chi^{(\pm 4n)})$ for $n \equiv \pm 1 \pmod 4$. For a square-free integer $n$, we see that $\widetilde \chi_n=\chi^{(n)}$ (resp. $\widetilde \chi_n=\chi^{(4n)}$ )  is a primitive character modulo $n$ for $n \equiv 1 \pmod 4$ (resp. $n \equiv -1 \pmod 4$).  In either case, for square-free integers $n$,
\begin{align*}
\begin{split}
	\big|L^{(2)}(w, \chi_n)\big|=&	\big|(1-\widetilde\chi_n(2)2^{-w})L(w, \widetilde\chi_n)\big| .
\end{split}
\end{align*}

  It follows from \eqref{Abound} and the above that,
\begin{align}
\begin{split}
\label{Aboundinitial}
		A(s,w)
\ll& |\zeta^{(2)}(2s)|\sumstar_{\substack{(n,2)=1}}\frac{|(1-\widetilde\chi_n(2)2^{-w}) L(w, \widetilde\chi_n)|}{|n^{s}L^{(2)}(2s+w,  \chi_{n})|}.
\end{split}
\end{align}

Now \eqref{L1estimation} and partial summation implies that, except for a simple pole at $w=1$, both sums of the right-hand side expression in \eqref{Aboundinitial} are convergent for $\Re(s)>1$,  $\Re(w) \geq 1/2$ as well as for $\Re(2s)>1$, $\Re(2s+w)>1$, $\Re(s+w)>3/2$, $\Re(w) < 1/2$.  Hence, $A(s,w)$ converges absolutely in the region
\begin{equation*}
		S_0=\{(s,w): \Re(s)>1,\ \Re(2s+w)>1,\ \Re(s+w)>3/2\}.
\end{equation*}

As the condition $\Re(2s+w)>1$ is contained in the other conditions, the description of $S_0$ simplifies to
\begin{equation*}
		S_0=\{(s,w): \Re(s)>1, \ \Re(s+w)>3/2\}.
\end{equation*}

 Next, upon writing $m=m_0m^2_1$ with $m_0$ odd and square-free, we recast the last expression of \eqref{Aswzexp} as
\begin{align}
\label{Sum A(s,w,z) over n}		
A(s,w)=&\sum_{\substack{(m,2)=1}}\frac{L( s, \chi^{(4m)})}{m^w}=\sum_{\substack{(m_1,2)=1}}\frac{1}{m_1^{2w}}\sumstar_{\substack{(m_0,2)=1}}\frac{L( s, \chi^{(4m_0)})\prod_{p | m_1}(1-\chi^{(4m_0)}(p)p^{-s}) }{m_0^{w}}.
\end{align}

 Note that $\chi^{(4m_0)}$ is a primitive character modulo $4m_0$ for $m_0 \equiv -1 \pmod 4$.  Arguing as above by making use of \eqref{L1estimation} and partial summation again, the sum over $m$ such that $m_0 \equiv -1 \pmod 4$ in \eqref{Sum A(s,w,z) over n} converges absolutely in the region
\begin{align*}
		S_1 =& \{(s,w):\hbox{$\Re(w)>1, \ \Re(s+w)\geq \tfrac 32, \ \Re (2s+w) > \tfrac{3}{2} $}\} .
\end{align*}
Note that in the case $m_0 \equiv -1 \pmod{4}$, $m$ is never a square. \newline

  Similarly,  the sum over $m$ with $m_0 \equiv 1 \pmod 4$ in \eqref{Sum A(s,w,z) over n} also converges absolutely.  In this case, $\chi^{(m_0)}$ is a primitive character modulo $m_0$.  This allows us to deduce that the function $A(s,w)$ converges absolutely in the region $S_1$, except for a simple pole at $s=1$ arising from the summands with $m=\square$. \newline

 Notice that the convex hull of $S_0$ and $S_1$ is
\begin{equation}
\label{Region of convergence of A(s,w,z)}
		S_2=\{(s,w):\ \Re(s+w)>\tfrac32, \ \Re(2s+w)>\tfrac32 \}.
\end{equation}

Hence, Theorem \ref{Bochner} implies that $(s-1)(w-1)A(s,w)$ converges absolutely in the region $S_2$.

\subsection{Residue of $A(s,w)$ at $s=1$}
\label{sec:resA}
	
	We see that $A(s,w)$ has a pole at $s=1$ arising from the terms with $m=\square$ from \eqref{Sum A(s,w,z) over n}. In order to compute the corresponding residue and for later use, we define the sum
\begin{align*}
\begin{split}
 A_1(s,w) := \sum_{\substack{(m,2)=1 \\ m =  \square}}\frac{L\lz s, \chi^{(4m)}\pz}{m^w}
=  \sum_{\substack{(m,2)=1 \\ m =  \square}}\frac{\zeta(s)\prod_{p | 2m}(1-p^{-s}) }{m^w} .
\end{split}
\end{align*}

 For any $t \in \mc$, let $a_t(n)$ be the multiplicative function such that $a_t(p^k)=1-1/p^t$ for any prime $p$.  This notation renders
\begin{align*}
\begin{split}
 A_1(s,w)
=  \zeta^{(2)}(s)\sum_{\substack{(m,2)=1 \\ m =  \square}}\frac{a_s(m) }{m^w} .
\end{split}
\end{align*}

Recasting the last sum above as an Euler product,
\begin{align}
\label{residuesgen}
\begin{split}
	 A_1(s,w)
=&  \zeta^{(2)}(s)\prod_{p>2}\sum_{\substack{m \geq0\\m \even}}\frac{a_s(p^{m})}{p^{mw}}= \zeta^{(2)}(s)\prod_{p>2}\lz1+\lz 1-\frac 1{p^s} \pz \frac 1{p^{2w}}(1-p^{-2w})^{-1} \pz \\
=& \zeta^{(2)}(s)\zeta^{(2)}(2w)\prod_{p>2}\lz1-\frac 1{p^{s+2w}}\pz =  \ \zeta(s)\zeta(2w)P(s,w),
\end{split}
\end{align}
  where
\begin{align}
\label{Pdef}
\begin{split}
		P(s, w)=& \lz 1-\frac 1{2^s} \pz \lz 1-\frac 1{2^{2w}} \pz \prod_{p>2}\lz1-\frac 1{p^{s+2w}}\pz.
\end{split}
\end{align}

  It follows from \eqref{residuesgen} and \eqref{Pdef} that except for a simple pole at $s=1$, the functions $P(s, w)$ and $A_1(s,w)$ are holomorphic in the region
\begin{align}
\label{S3}
	S_3=\Big\{(s,w):\ &  \Re(s+2w)>1, \Re (w) > \tfrac{1}{2}  \Big\}.
\end{align}
Note that it may appear from (3.6) that $A_1(s,w)$ has a pole when $w=1/2$.  However, as in the region $S_3$ we have $\Re(w)>1/2$, the function $(s-1)(w-1)(s+w-3/2)A(s,w)$ is holomorphic in $S_3$. \newline

   As the residue of $\zeta(s)$ at $s = 1$ equals $1$, we deduce that
\begin{align}
\label{Residue at s=1}
 \res_{s=1}& A(s, \tfrac{1}{2}+\alpha) = \res_{s=1} A_1(s, \tfrac{1}{2}+\alpha)
=\zeta(1+2\alpha)P(1,\tfrac{1}{2}+\alpha).
\end{align}

\subsection{Second region of absolute convergence of $A(s,w)$}

 We infer from \eqref{Sum A(s,w,z) over n} that
\begin{align}
\begin{split}
\label{A1A2}
 A(s,w) =& \sum_{\substack{(m,2)=1 \\ m =  \square}}\frac{L( s, \chi^{(4m)})}{m^w} +\sum_{\substack{(m,2)=1 \\ m \neq \square}}\frac{L( s, \chi^{(4m)})}{m^w} \\
=&  \sum_{\substack{(m,2)=1 \\ m =  \square}}\frac{\zeta(s)\prod_{p | 2m}(1-p^{-s}) }{m^w} +\sum_{\substack{(m,2)=1 \\ m \neq \square}}\frac{L( s, \chi^{(4m)})}{m^w} := \ A_1(s,w)+A_2(s,w).
\end{split}
\end{align}
   Here we recall from our discussions in the previous section that $A_1(s,w)$ is holomorphic in the region $S_3$, except for a simple pole at $s=1$. \newline

  Next, observe that $\chi^{(4m)}$ is a Dirichlet character modulo $4m$ for any $m\geq1$ such that $\chi^{(4m)}(-1)=1$. We thus apply the functional equation \eqref{Equation functional equation with Gauss sums} given in Lemma \ref{Functional equation with Gauss sums} for $L\lz s, \chi^{(4m)}\pz$ in the case $m \neq \square$, arriving at
\begin{align}
\begin{split}
\label{Functional equation in s}
 A_2(s,w) =\frac{\pi^{s-1/2}}{4^s}\frac {\Gamma (\frac{1-s}2)}{\Gamma(\frac {s}2) } C(1-s,s+w),
\end{split}
\end{align}
  where $C(s,w)$ is given by the double Dirichlet series
\begin{align*}
		C(s,w)=& \sum_{\substack{q, m \\ (m,2)=1 \\ m \neq \square}}\frac{\tau(\chi^{(4m)}, q)}{q^sm^w}=\sum_{\substack{q, m \\ \substack{(m,2)=1}}}\frac{\tau(\chi^{(4m)}, q)}{q^sm^w}-\sum_{\substack{q, m \\ (m,2)=1 \\ m = \square}}\frac{\tau(\chi^{(4m)}, q)}{q^sm^w}.
\end{align*}	

 Note that $C(s,w)$ is initially convergent for $\Re(s)$, $\Re(w)$ large enough by \eqref{Region of convergence of A(s,w,z)}, \eqref{S3} and the functional equation \eqref{Functional equation in s}.  To extend this region, we recast $C(s,w)$ as
\begin{align}
\label{Cexp}
\begin{split}
  C(s,w)=& \sum^{\infty}_{q =1}\frac{1}{q^s}\sum_{\substack{(m,2)=1}}\frac{\tau\lz \chi^{(4m)}, q \pz }{m^w}-\sum^{\infty}_{q =1}\frac{1}{q^s}\sum_{\substack{(m,2)=1 \\ m = \square}}\frac{\tau\lz \chi^{(4m)}, q \pz }{m^w} := \ C_1(s,w)-C_2(s,w).
\end{split}
\end{align}

   For two Dirichlet characters $\psi,\psi'$ whose conductors divide $8$, we define
\begin{align}
\begin{split}
\label{C12def}
	C_1(s,w;\psi,\psi'):= \sum_{l,q\geq 1}\frac{G\lz \chi_l,q\pz\psi(l)\psi'(q) }{l^wq^s} \quad \mbox{and} \quad C_2(s,w;\psi,\psi'):=  \sum_{l,q\geq 1}\frac{G \lz \chi_{l^2},q\pz\psi(l)\psi'(q) }{l^{2w}q^s}.
\end{split}
\end{align}

  We follow the arguments contained in \cite[\S 6.4]{Cech1} and apply Lemma \ref{Lemma changing Gauss sums} to obtain that
\begin{align}
\begin{split}
\label{C(s,w,z) as twisted C(s,w,z)}
		C_1(s,w)=&
			-2^{-s}\big( C_1(s,w;\psi_2,\psi_1)+C_1(s,w;\psi_{-2},\psi_1)\big) +4^{-s}\big( C_1(s,w;\psi_1,\psi_0)+C_1(s,w;\psi_{-1},\psi_0)\big)\\
			&\hspace*{2cm} +C_1(s,w;\psi_1,\psi_{-1})-C_1(s,w;\psi_{-1},\psi_{-1}), \\
   C_2(s,w)=&-2^{1-s}C_2(s,w;\psi_1,\psi_1)+2^{1-2s}C_2(s,w;\psi_1,\psi_0).
\end{split}
\end{align}

  We now follow the approach by K. Soundararajan and M. P. Young in \cite[\S 3.3]{S&Y} to write every integer $q \geq 1$ uniquely as $q=q_1q^2_2$ with $q_1$ square-free to derive that
\begin{equation}
\label{Cidef}
		C_i(s,w;\psi,\psi')=\sumstar_{q_1}\frac{\psi'(q_1)}{q_1^s}\cdot D_i(s, w; q_1,\psi, \psi'), \quad i =1,2,
\end{equation}
	where
\begin{align}
\label{Didef}
\begin{split}
		D_1(s, w; q_1, \psi,\psi')=\sum_{l,q_2=1}^\infty\frac{G\lz \chi_{l},q_1q^2_2\pz\psi(l)\psi'(q^2_2) }{l^wq^{2s}_{2}} \quad \mbox{and} \quad D_2(s, w; q_1, \psi,\psi')=\sum_{l,q_2=1}^\infty\frac{G\lz \chi_{l^2},q_1q^2_2\pz\psi(l)\psi'(q^2_2) }{l^{2w}q^{2s}_{2}}.
\end{split}
\end{align}

  The following result develops the required analytic properties of $D_i(s, w;q_1, \psi, \psi')$.
\begin{lemma}
\label{Estimate For D(w,t)}
 With the notation as above and assuming the truth of GRH, for $\psi \neq \psi_0$, the functions $D_i(s, w; q_1, \psi,\psi')$, with $i=1$, $2$ have meromorphic continuations to the region
\begin{align}
\label{Dregion}
		\{(s,w): \Re(s)>1, \ \Re(w)> \tfrac{3}{4} \}.
\end{align}
    Moreover, the only poles in this region occur for $D_1(s, w; q_1, \psi,\psi')$ at $w = 3/2$ when $q_1=1,  \psi=\psi_1$ or when $q_1=2,  \psi=\psi_2$ and the corresponding pole is simple in either case.  For $\Re(s) \geq 1+\varepsilon$, $\Re(w) \geq 3/4+\varepsilon$, away from the possible
poles, we have
\begin{align}
\label{Diest}
			|D_i(s, w; q_1, \psi,\psi')|\ll (q_1(1+|w|))^{\max \{ (3/2-\Re(w))/2, 0 \}+\varepsilon}.
\end{align}		
\end{lemma}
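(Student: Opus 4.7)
The plan is to exploit the multiplicativity of $G(\chi_n,q)$ in $n$ (Lemma~\ref{lem:Gauss}) to write the inner $l$-sums in \eqref{Didef} as Euler products, extract an underlying $L$-function, and combine convexity for that $L$-function with the GRH bound from \eqref{PgLest1} applied to $1/\zeta(2w-1)$.  I would treat $D_2$ first, as it is the soft case: for any odd prime $p\nmid q_1q_2$ one has $a_p=0$ while $v_p(l^2)\geq 2$ whenever $p\mid l$, so Lemma~\ref{lem:Gauss} forces $G(\chi_{l^2},q_1q_2^2)=0$ unless every prime divisor of $l$ lies in $\{2\}\cup\{p:p\mid q_1q_2\}$.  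Consequently the inner $l$-sum reduces to a finite Euler product supported on the divisors of $2q_1q_2$, which the trivial bound $|G(\chi_{p^{2k}},p^a)|\leq p^a$ controls by $(q_1q_2)^{\max\{1-\Re(w),\,0\}+\varepsilon}$.  Absorbing the outer $q_2^{-2s}$ then gives absolute convergence and a bound at least as strong as \eqref{Diest} throughout \eqref{Dregion}, in particular showing that $D_2$ is holomorphic there.

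The heart of the argument is $D_1$.  At an odd prime $p\nmid 2q_1q_2$, Lemma~\ref{lem:Gauss} (combined with $\chi_{q_1q_2^2}(p)=\chi_{q_1}(p)$ and quadratic reciprocity for the Kronecker symbol to move the symbol between numerator and denominator) shows that the $p$-local factor of $\sum_l G(\chi_l,q_1q_2^2)\psi(l)/l^w$ equals $1+\chi_{q_1}(p)\psi(p)/p^{w-1/2}$, up to a quadratic twist of $\psi$ depending only on $q_1\bmod 4$.  Applying $1+x=(1-x^2)/(1-x)$ and using that $(\chi_{q_1}\psi)^2$ is principal away from $2q_1q_2$, the product of these generic factors telescopes to $L(w-\tfrac12,\chi_{q_1}\psi)/\zeta(2w-1)$ times a finite Euler product $R(s,w;q_1,q_2)$ collecting the exceptional factors at primes dividing $2q_1q_2$.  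Substituting back into \eqref{Didef} yields
$$D_1(s,w;q_1,\psi,\psi')=\frac{L(w-\tfrac12,\chi_{q_1}\psi)}{\zeta(2w-1)}\sum_{q_2\geq 1}\frac{\psi'(q_2^2)\,R(s,w;q_1,q_2)}{q_2^{2s}},$$
initially for $\Re(s),\Re(w)$ large.  A term-by-term bound $R\ll(q_1q_2)^{\varepsilon}$ valid in the target region then gives absolute convergence of the outer sum for $\Re(s)>1$ and provides the meromorphic continuation of $D_1$ to \eqref{Dregion}.

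With this identity in hand, the estimate \eqref{Diest} follows from three inputs: the convexity bound \eqref{Lchibound} for $L(w-\tfrac12,\chi_{q_1}\psi)$, which supplies the factor $(q_1(1+|w|))^{(3/2-\Re(w))/2+\varepsilon}$ in the strip $1/2\leq\Re(w)\leq 3/2$; the GRH consequence $|\zeta(2w-1)|^{-1}\ll(1+|w|)^{\varepsilon}$ on $\Re(w)\geq 3/4+\varepsilon$ coming from \eqref{PgLest1}; and the polynomial bound on $R$ together with the convergent $q_2$-sum.  The only possible pole in the target region comes from $L(w-\tfrac12,\chi_{q_1}\psi)$ at $w=3/2$, occurring precisely when $\chi_{q_1}\psi$ is principal.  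A short case analysis, using that on odd primes $\psi_1$ is trivial, $\psi_2(p)=(2/p)$, and $\chi_2(p)=(p/2)=(2/p)$ by reciprocity for the Kronecker symbol, pins this down to exactly $(q_1,\psi)=(1,\psi_1)$ and $(q_1,\psi)=(2,\psi_2)$, with a simple pole in either case.

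The main obstacle I foresee is the meticulous bookkeeping of the exceptional Euler factors at $p=2$ and at primes $p\mid q_1$, where Lemma~\ref{lem:Gauss} takes a more delicate form.  One must verify that these bad factors neither introduce spurious poles in \eqref{Dregion} nor cancel the expected $L$-pole in the two exceptional cases, and that their contribution is absorbed into the $q_1^{\varepsilon}$ allowed by \eqref{Diest}.  Everything else reduces to routine Euler-product manipulations once the correct alignment of characters has been carried out.
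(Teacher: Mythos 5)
Your overall strategy matches the paper's: exploit multiplicativity to write an Euler product, extract the factor $L(w-\tfrac12,\chi^{(q_1)}\psi)/\zeta(2w-1)$, then combine the convexity bound \eqref{Lchibound} with the GRH bound \eqref{PgLest1} applied to $\zeta(2w-1)^{-1}$. The one organizational difference is that you perform the Euler product in $l$ alone with the $q_2$-sum kept outside, whereas the paper factors the summand as a jointly multiplicative function of $(l,q_2)$ and writes the entire double sum as a single Euler product $\prod_p D_{1,p}$. Both organizations lead to the same conclusion, and your pole analysis (identifying $(q_1,\psi)=(1,\psi_1)$ and $(2,\psi_2)$ via principality of $\chi^{(q_1)}\psi$) agrees with the paper's. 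Your argument for $D_2$, showing the $l$-support collapses to divisors of $2q_1q_2$, is a valid alternative to the paper's terser observation that the exponent $2w$ on $l$ kills the pole.

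However, your claimed bound $R\ll(q_1q_2)^\varepsilon$ is false in the target region. At a prime $p\nmid 2q_1$ with $p^b\|q_2$, the local $l$-sum has nonzero terms up to $l=2b+1$, and the dominant term $\varphi(p^{2b})/p^{2bw}$ contributes roughly $p^{2b(1-\Re(w))}$; hence $R$ grows like $q_2^{2\max\{1-\Re(w),0\}+\varepsilon}$, which for $\Re(w)$ near $3/4$ is of size $q_2^{1/2+\varepsilon}$, not $q_2^\varepsilon$. This is a nonvacuous dependence that your closing remark (which flags only $p=2$ and $p\mid q_1$) does not acknowledge. Fortunately the error is harmless: the corrected bound still gives $\sum_{q_2}|R|\,q_2^{-2\Re(s)}\ll q_1^\varepsilon$ throughout $\Re(s)\geq 1+\varepsilon$, $\Re(w)\geq 3/4+\varepsilon$, since $2\Re(s)-2\max\{1-\Re(w),0\}>1$ there. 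The analogous slip occurs in your $D_2$ bound, where the stated exponent on $q_1q_2$ should really sit on $q_1q_2^2$; again the $q_2^{2s}$ in the outer sum absorbs the discrepancy. So the approach is sound and reaches \eqref{Diest}, but the $q_2$-dependence of the correction factors needs to be tracked correctly — this is one advantage of the paper's joint Euler product, which keeps the $p^{-2ks}$ weights attached to the local factors from the outset and makes the $O(p^{-2\Re(s)})$ suppression transparent.
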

\begin{proof}
   We focus on $D_1(s, w; q_1, \psi,\psi')$ here since the proof is similar for $D_2(s, w; q_1, \psi,\psi')$ which gives no pole due to the exponent of $2w$ of $l$ in its defintion.  By Lemma \ref{lem:Gauss}, in the double sum in \eqref{Didef} defining $D_1(s,w; q_1, \psi,\psi')$, the summands there are jointly multiplicative functions of $l,q_2$(here we say $S(l,q)$ is jointly multiplicative in $l$ and $q$ if $S(l_1l_2, q_1q_2) = S(l_1,q_1)S(l_2,q_2)$ for $(l_1,l_2)=(q_1,q_2)=(l_1,q_2)=(l_2,q_1)=1$).  Moreover, we may assume that $l$ is odd in \eqref{Didef} since $\psi \neq \psi_0$. These observations enable us to recast $D_1(s,w; q_1, \psi,\psi')$ as an Euler product such that
\begin{align}
\label{D1Eulerprod}
\begin{split}	
 &	D_1(s, w; q_1, \psi,\psi')= \prod_p D_{1,p}(s, w; q_1,  \psi,\psi'),
\end{split}
\end{align}
  where
\begin{align}
\label{Dexp}
\begin{split}	
 &	D_{1,p}(s, w; q_1,  \psi,\psi')= \displaystyle
\begin{cases}
\displaystyle \sum_{k=0}^\infty\frac{ \psi'(2^{2k})}{2^{2ks}}, \quad p=2, \\
\displaystyle \sum_{l,k=0}^\infty\frac{ \psi(p^l)\psi'(p^{2k})G\lz \chi_{p^l}, q_1p^{2k} \pz  }{p^{lw+2ks}}, \quad p>2.
\end{cases}
\end{split}
\end{align}
	
Now, for any fixed $p > 2$,
\begin{align}
\label{Dgenest}	
\begin{split}
  &	\sum_{l,k=0}^\infty\frac{ \psi(p^l)\psi'(p^{2k})G\lz \chi_{p^l}, q_1p^{2k} \pz }{p^{lw+2ks}}  = \sum_{l=0}^\infty \frac{ \psi(p^l)G\lz \chi_{p^l}, q_1 \pz }{p^{lw}}  + \sum_{l \geq 0, k \geq 1}\frac{ \psi(p^l)\psi'(p^{2k})G\lz \chi_{p^l}, q_1p^{2k} \pz }{p^{lw+2ks}}.
\end{split}
\end{align}

Remembering that $q_1$ is square-free, we deduce from Lemma \ref{lem:Gauss} that
\begin{align*}
\begin{split}
	|G( \chi_{p^l}, q_1p^{2k} )| \ll p^l, \quad G(\chi_{p^l}, q_1p^{2k})=0, \quad l \geq 2k+3.
\end{split}
\end{align*}

  The above estimations allow us to see that when $\Re(s)>1$, $\Re(w)>3/4$,
\begin{align}
\label{Dk1est}	
\begin{split}
 \sum_{l \geq 0, k \geq 1}\frac{ \psi(p^l)\psi'(p^{2k})G\lz \chi_{p^l}, q_1p^{2k} \pz }{p^{lw+2ks}} =& \sum_{k \geq 1}\frac{ \psi'(p^{2k})G\lz \chi_{1}, q_1p^{2k} \pz }{p^{2ks}}+\sum_{l, k \geq 1}\frac{ \psi(p^l)\psi'(p^{2k})G\lz \chi_{p^l}, q_1p^{2k} \pz }{p^{lw+2ks}}  \\
\ll & p^{-2\Re(s)}+ \Bigg| \sum^{\infty}_{k=1}\sum_{1 \leq l \leq 2k+2}\frac{1}{p^{l(w-1)+2ks}}\Bigg|  \\
\ll & p^{-2\Re(s)}+\Bigg| \sum^{\infty}_{k=1}\frac{2k+2}{p^{2ks}}\Big (\frac 1{p^{w-1}}+ \frac 1{p^{(2k+2)(w-1)}}\Big ) \Bigg|  \\
\ll & p^{-2\Re(s)}+p^{-2\Re(s)-\Re(w)+1}+p^{-2\Re(s)-4\Re(w)+4} .
\end{split}
\end{align}
Further applying Lemma \ref{lem:Gauss} yields that if $p \nmid 2q_1$ and $\Re(w)>3/4$,
\begin{align}
\label{Dgenl0gen}	
\begin{split}
  & \sum_{l=0}^\infty \frac{ \psi(p^l)G\lz \chi_{p^l}, q_1 \pz }{p^{lw}} = 1+\frac{ \psi(p)\chi^{(q_1)}(p)}{p^{w-1/2}}
= L_p \lz w-\tfrac{1}{2}, \chi^{(q_1)}\psi\pz  \lz 1-\frac {1}{p^{2w-1}} \pz =  \frac {L_{p}\lz w-\tfrac{1}{2}, \chi^{(q_1)}\psi\pz}{\zeta_{p}(2w-1)}.
\end{split}
\end{align}

  We derive from \eqref{Dexp}--\eqref{Dgenl0gen} that for $p \nmid 2q_1$, $\Re(s)>1$, $\Re(w)>3/4$,
\begin{align}
\label{Dgenexp}	
\begin{split}
   & D_{1,p}(s, w; q_1,  \psi,\psi') =  \frac {L_{p}\lz w-\tfrac{1}{2}, \chi^{(q_1)}\psi\pz}{\zeta_{p}(2w-1)}\lz 1+O \Big ( p^{-2\Re(s)}+p^{-2\Re(s)-\Re(w)+1}+p^{-2\Re(s)-4\Re(w)+4} \Big ) \pz .
\end{split}
\end{align}
  Now, we deduce the first assertion of the lemma from \eqref{D1Eulerprod}, \eqref{Dexp} and the above.  We also see this way that the only poles in the region given in \eqref{Dregion} are at $w = 3/2$ and this occurs when $q_1=1$, $\psi=\psi_1$ or when $q_1=2, \psi=\psi_2$. In either case, the corresponding pole is simple.   \newline

  We further note that Lemma \ref{lem:Gauss} implies that when $p | q_1, p \neq 2$,
\begin{align}
\label{Dgenl0}	
\begin{split}
  & \sum_{l=0}^\infty \frac{ \psi(p^l)G\lz \chi_{p^l}, q_1 \pz }{p^{lw}} = 1-\frac{ \psi(p^2)}{p^{2w-1}} =  1+O(p^{-2\Re(w)+1}).
\end{split}
\end{align}

  It follows from \eqref{Dexp}, \eqref{Dk1est} and \eqref{Dgenl0} that for $p | q_1, p \neq 2$, $\Re(s)>1, \Re(w)>3/4$,
\begin{align}
\label{Dgenexp1}	
\begin{split}
   & D_{1,p}(s, w;q_1,  \psi,\psi')
=  1+O \Big (p^{-2\Re(w)+1}+p^{-2\Re(s)}+p^{-2\Re(s)-\Re(w)+1}+p^{-2\Re(s)-4\Re(w)+4}\Big )  .
\end{split}
\end{align}

  We conclude from \eqref{D1Eulerprod}, \eqref{Dexp}, \eqref{Dgenexp} and \eqref{Dgenexp1} that for $\Re(s)\geq 1+\varepsilon$ and $\Re(w) \geq 3/4+\varepsilon$,
\begin{align*}
\begin{split}
	D_{1}(s, w; q_1, \psi,\psi') \ll	& q_1^{\varepsilon}\Big |\frac {L^{(2q_1)}\lz w-\tfrac{1}{2}, \chi^{(q_1)}\psi\pz}{\zeta^{(2q_1)}(2w-1)} \Big | \ll  q_1^{\varepsilon}\Big |\frac {L\lz w-\tfrac{1}{2}, \chi^{(q_1)}\psi\pz}{\zeta(2w-1)} \Big | \ll  (q_1(1+|w|))^{\max \{  (3/2-\Re(w))/2, 0 \}+\varepsilon},
\end{split}
\end{align*}
  where the last bound follows from \eqref{PgLest1} (by taking $\widehat \psi=\psi_0$ to be the
primitive principal character) and \eqref{Lchibound1}.  This leads to the estimate in \eqref{Diest} and completes the proof of the lemma.
\end{proof}

We remark here that the use of the Lindel\"of hypothesis, a consequence of GRH, in the proof of Lemma~\ref{Estimate For D(w,t)} will lead to the improvement of the exponent of $(1+|\alpha|)$ in the $O$-term of \eqref{Asymptotic for ratios of all characters} to $1/4+\varepsilon$.  We keep our computations as unconditional as possible, so that one can deduce an unconditional version of our main result from the arguments. \newline

Now applying Lemma \ref{Estimate For D(w,t)} with \eqref{C(s,w,z) as twisted C(s,w,z)} and \eqref{Cidef}, we infer that $(w-3/2)C(s,w)$ is defined in the region
\begin{equation*}
		\{(s,w):\ \Re(s)>1, \ \Re(w)>3/4, \ \Re(s+w/2)>7/4 \}.
\end{equation*}
	The above together with \eqref{S3} and \eqref{A1A2} now implies that $(s-1)(w-1)(s+w-3/2)A(s,w)$ can be extended to the region
\begin{align} \label{S4def}
		S_4=& \{(s,w): \ \Re(s+2w)>1, \ \Re(s+w)>3/4,\ \Re(w-s)>3/2,  \ \Re(s)<0\}.
\end{align}
Note that the condition $\Re (w+s)>3/4$ and $\Re (s) < 0$ imply that $\Re (w) > 3/4$ in $S_4$, so that we have $S_4 \subset S_3$.  Additionally, the condition $\Re(s+2w)>1$ is redundant.  So
\begin{equation*}
		S_4=\{(s,w):\ \Re(s+w)>3/4,\ \Re(w-s)>3/2, \ \Re(s)<0\}.
\end{equation*}
We remark here that it is possible to repeat the argument of \cite[Section 6.4]{Cech1} to study the analytical property of $C(s,w)$ without relying on the joint multiplicativity of $G(\chi_l, q_1q_2)$ in $l$ and $q_2$.  However, the analogue of $S_4$ obtained this way would be more complicated to describe than what we have above. \newline

As noted below \eqref{S3}, the function $(s-1)(w-1)(s+w-3/2)A(s,w)$ is holomorphic in $S_3$ and hence in $S_4$.  Similarly, $(s-1)(w-1)(s+w-3/2)A(s,w)$ is holomorphic in $S_2$. Then Theorem~\ref{Bochner} implies that $(s-1)(w-1)(s+w-3/2)A(s,w)$ is holomorphic in the convex hull of $S_2 \cup S_4$.  We denoted this convex hull by $S_5$ and
\begin{equation*}
		S_5=\{(s,w):\ \Re(s+w)>3/4 \}.
\end{equation*}
Now Theorem \ref{Bochner} renders that $(s-1)(w-1)(s+w-3/2)A(s,w)$ admits analytic continuation to the region $S_5$.
	
\subsection{Residue of $A(s,w)$ at $s=3/2-w$}
\label{sec:resAw}

  We keep the notation from the proof of Lemma \ref{Estimate For D(w,t)}. We deduce from \eqref{Cexp}, \eqref{C(s,w,z) as twisted C(s,w,z)}, \eqref{Cidef} and Lemma \ref{Estimate For D(w,t)}, that $C(s, w)$ has a pole at $w=3/2$ and
\begin{align} \label{Cres}
\res_{w=3/2}C(s,w)=& 4^{-s}\res_{w=3/2}D_1(s,w;1, \psi_1, \psi_0)+\res_{w=3/2}D_1(s,w;1, \psi_1, \psi_{-1}).
\end{align}
  Here we remark that the pole at $w=3/2$ for $D_1(s, w; q_1, \psi,\psi')$ when $q_1=2, \psi=\psi_2$ does not lead to any pole in $C(s,w)$ since this only affects $C_1(s, w; \psi_2,\psi_1)$ and then we have $\psi_1(q_1)=0$ when $q_1=2$. \newline

     We apply Lemma \ref{lem:Gauss} to see that for $p \neq 2$, 	
\begin{align}
\label{Dk1estpsi1}	
\begin{split}
 \sum_{l \geq 0, k \geq 1}\frac{ \psi'(p^k)G\lz \chi_{p^l}, p^{2k} \pz }{p^{3l/2+2ks}} =& \sum_{k \geq 1}\frac{ G\lz \chi_{1}, p^{2k} \pz }{p^{2ks}}+\sum_{l, k \geq 1}\frac{ G\lz \chi_{p^l}, p^{2k} \pz }{p^{3l/2+2ks}}  \\
= & p^{-2s}(1-p^{-2s})^{-1}+\sum_{k \geq 1}\frac 1{p^{2ks}}\Big (\sum^k_{l=1}\frac{ \varphi(p^{2l}) }{p^{3l}}+\frac {p^{2k}\sqrt{p}}{p^{3(2k+1)/2}} \Big ) \\
= & p^{-2s}(1-p^{-2s})^{-1}+\frac 1p\sum_{k \geq 1}\frac 1{p^{2ks}} = \Big(1+\frac 1p \Big)p^{-2s}(1-p^{-2s})^{-1}.
\end{split}
\end{align}

   We derive from \eqref{Dexp}, \eqref{Dgenest} and \eqref{Dgenl0gen} that for $p \neq 2$,
\begin{align}
\label{D1pexp}	
\begin{split}
   D_{1,p}(s, w; 1,  \psi_1,\psi_0)= D_{1,p}(s, w; 1,  \psi_1,\psi_{-1})=& 1+\frac{1}{p^{w-1/2}}+ \sum_{l \geq 0, k \geq 1}\frac{ \psi'(p^k)G\lz \chi_{p^l}, p^{2k} \pz }{p^{lw+2ks}}
= \zeta_p(w-1/2)Q_p(s,w),
\end{split}
\end{align}
  where, using \eqref{Dk1estpsi1},
\begin{align}
\label{Qpexp}	
\begin{split}
   Q_{p}(s, w)\Big |_{w=3/2} =& \Big(1-\frac {1}{p^{2}} \Big)(1-p^{-2s})^{-1}.
\end{split}
\end{align}
	
 It follows from \eqref{D1Eulerprod}, \eqref{Dexp}, \eqref{D1pexp} and \eqref{Qpexp} that we have
\begin{align}
\label{D1exp}	
\begin{split}
   D_{1}(s, w; 1,  \psi_1,\psi_0) =\zeta(w-1/2)Q(s,w), \quad  D_{1}(s, w; 1,  \psi_1,\psi_{-1})=(1-2^{-2s})\zeta(w-1/2)Q(s,w),
\end{split}
\end{align}
	with
\begin{align}
\label{Qexp}	
\begin{split}
   Q(s,w)\Big |_{w=3/2}=\frac {2\zeta(2s)}{3\zeta(2)}.
\end{split}
\end{align}
	
 As the residue of $\zeta(s)$ at $s=1$ equals $1$, we deduce from \eqref{Cres}, \eqref{D1exp} and \eqref{Qexp} that
\begin{align*}
		\res_{w=3/2}C(s,w)=& \frac {2}{3}\frac {\zeta(2s)}{\zeta(2)}.
\end{align*}
	
Now \eqref{A1A2}, the functional equation \eqref{Functional equation in s} and the above lead to
\begin{align*}
\begin{split}
 \res_{s=3/2-w}A(s,w) =\res_{s=3/2-w}A_2(s,w)=\frac{2 \cdot \pi^{1-w}}{3 \cdot 4^{3/2-w}}\frac {\Gamma (\frac{w-1/2}2)}{\Gamma(\frac {3/2-w}2) } \frac {\zeta(2w-1)}{\zeta(2)}.
\end{split}
\end{align*}

Setting $w=1/2+\alpha$ in the above gives
\begin{align}
\label{Aress1alpha}
\begin{split}
			&\res_{s=1-\alpha}A(s, \tfrac{1}{2}+\alpha) =\frac{2^{2\alpha-1}\pi^{1/2-\alpha}\Gamma (\frac {\alpha}2)}{3\Gamma (\frac {1- \alpha}2)}\frac {\zeta(2\alpha)}{\zeta(2)}.
\end{split}
\end{align}
	Note that the functional equation \eqref{fneqnquad} for $d=1$ implies that
\begin{align*}
  \zeta(2\alpha)=\pi^{2\alpha-1/2}\frac {\Gamma(\tfrac{1}{2}-\alpha)}{\Gamma (\alpha)}\zeta(1-2\alpha).
\end{align*}

   The above allows us to recast the expression in \eqref{Aress1alpha} as
\begin{align}
\label{Aress1}
\begin{split}
		&\res_{s=1-\alpha}A(s,\tfrac{1}{2}+\alpha) =\frac{\pi^{\alpha}\Gamma (\tfrac{1}{2}-\alpha)\Gamma (\frac {\alpha}2)}{\Gamma(\frac{1-\alpha}2)\Gamma (\alpha)}\cdot\frac{\zeta(1-2\alpha)}{\zeta(2)}\cdot \frac{2^{2\alpha}}{6}.
\end{split}
\end{align}

\subsection{Bounding $A(s,w)$ in vertical strips}
\label{Section bound in vertical strips}

 We shall estimate $|A(s,w)|$ in vertical strips, which is necessary in the proof of Theorem \ref{Theorem for all characters}. \newline
	
  For previously defined regions $S_j$, we set
\begin{equation*}
		\widetilde S_j=S_{j,\delta}\cap\{(s,w):\Re(s) \geq -5/2,\ 2 \geq \Re(w) \},
\end{equation*}
	where $\delta$ is a fixed number with $0<\delta <1/1000$ and where $S_{j,\delta}= \{ (s,w)+\delta (1,1) : (s,w) \in S_j \}$ for $j \neq 4$ and $S_{4,\delta}= \{ (s,w)+\delta (-1,1) : (s,w) \in S_4 \}$. We further set
\begin{equation*}
		p(s,w)=(s-1)(w-1)(s+w-3/2)\Gamma(\tfrac w2).
\end{equation*}
Observe that $p(s,w)A(s,w)$ is analytic in the regions under our consideration. \newline

  We note that the function $\Gamma(\frac {1-w}{2})(w-1)$ is also analytic when $\Re(w) \leq 2$.  When $1/2 < \Re(w) \leq 2$, we apply \eqref{Stirlingratio} to see that in this case we have
\begin{align*}
  (w-1)\Gamma(\tfrac w2) \ll \Big|\Gamma(\tfrac {1-w}{2})(w-1)(w-10)^2\Big|.
\end{align*}
Now, \eqref{L1estimation} and partial summation can be used to bound $A(s,w)$ via \eqref{Aboundinitial}.  So in $\widetilde S_0\cap\{(s,w): 2 \geq \Re(w) > 1/2\}$,
\begin{align}
\label{AboundS0}
\begin{split}
      p(s,w)A(s,w) \ll \Big |w^{1/4} (1+100\cdot 2^{-w})(s+5)(w-10)^2(s+w-1/2)\Gamma(\tfrac {1-w}{2})(w-1)\Big |.
\end{split}
\end{align}

For $\Re(w) \leq 1/2$, we apply \eqref{fneqnquad} to estimate $A(s,w)$ from above by way of \eqref{Aboundinitial} to revert to the case $\Re(w) \geq 1/2$.  This gives that the bound in \eqref{AboundS0} continues to hold in $\widetilde S_0\cap\{(s,w): \Re(w) \leq 1/2 \}$.  Thus \eqref{AboundS0} is valid in the entire region $\widetilde S_0$. \newline

   Similarly, we bound the expression for $A(s,w)$ given in \eqref{Sum A(s,w,z) over n}.  In $\widetilde S_1$,
\begin{align*}
	p(s,w)A(s,w) \ll \Big |(1+100\cdot 2^{-w})(s+5)(w-10)^2(s+w-1/2)\Gamma(\tfrac {1-w}{2})(w-1)\Big | \cdot \Big|s+5\Big |^{\max \{1/2-\Re(s), 0\}+\varepsilon}.
\end{align*}
    From this, we apply Proposition \ref{Extending inequalities} by taking $g=p(s,w)A(s,w)$, $h=(1+100\cdot 2^{-w})(s+5)^5(w-10)^3(s+w-1/2)\Gamma(\frac {1-w}{2})(w-1)$ here and note that $h \neq 0$ in $\widetilde S_0 \cup \widetilde S_1$.  We thus deduce that in the convex hull $\widetilde S_2$ of $\widetilde S_0$ and $\widetilde S_1$,
\begin{equation} \label{AboundS2}
		p(s,w)A(s,w) \ll \Big |(1+100\cdot 2^{-w})(s+5)^5(w-10)^2(s+w-1/2)\Gamma(\tfrac {1-w}{2})(w-1)\Big |.
\end{equation}

   Moreover, $\widetilde S_4 \subset \widetilde S_3$ and the conditions $\Re(s+w)>3/4$, $\Re(s)<0$ given in \eqref{S4def} for the definition of $S_4$ imply that $\Re(w)>3/4$ so that $\zeta(2w) \ll 1$ in $\widetilde S_4$.  Then \eqref{Lchibound1} can be used to bound $\zeta(s)$ (corresponding to the case with $\psi=\psi_0$ being the primitive principal character).  This lead to an estimate for $A_1(s,w)$ in \eqref{residuesgen}.  Arguing as above reveals that in the region $\widetilde S_4$, we have
\begin{align}
\label{A1bound}
		p(s,w)A_1(s,w) \ll \Big | (1+100\cdot 2^{-w})(s+5)^5(w-10)^3(s+w-3/2)\Gamma(\tfrac {1-w}{2})(w-1)\Big |.
\end{align}

Also, we deduce from \eqref{Cexp}--\eqref{Didef} and Lemma \ref{Estimate For D(w,t)} that, under GRH,
\begin{equation}
\label{Csbound}
		|C(s,w)|\ll (1+|w|)^{\max \{ (3/2-\Re(w))/2, 0 \}+\varepsilon}
\end{equation}
   in the region
\begin{equation*}
		\{(s,w):\Re(s) \geq 1+\varepsilon, \ \Re(w) \geq 3/4+\varepsilon\}.
\end{equation*}
Now applying \eqref{A1A2}, the functional equation \eqref{Functional equation in s} together with \eqref{Stirlingratio} to bound the ratio of the gamma functions appearing there, as well as the bounds in \eqref{A1bound} and \eqref{Csbound},  we obtain that in the region $\widetilde S_4$, under GRH,
\begin{align}
\label{AboundS3}
\begin{split}
		p(s,w)A(s,w) \ll& \Big |(s+5)(w-10)^2(s+w-1/2)\Gamma(\tfrac {1-w}{2})(w-1)\Big | (1+|s+w|)^{\max \{(3/2-\Re(s+w))/2, 0\} +\varepsilon}(1+|s|)^{3+\varepsilon} \\
\ll & \Big |(s+5)^6(w-10)^4(s+w-1/2)\Gamma(\tfrac {1-w}{2})(w-1)\Big |.
\end{split}
\end{align}

 Lastly, we conclude from \eqref{AboundS2},  \eqref{AboundS3} and Proposition \ref{Extending inequalities} that in the convex hull $\widetilde S_5$ of $\widetilde S_2$ and $\widetilde S_4$, under GRH,
\begin{equation}
\label{AboundS4}
		p(s,w)A(s,w) \ll \Big |(1+100\cdot 2^{-w})(s+5)^6(w-10)^4(s+w-1/2)\Gamma(\tfrac {1-w}{2})(w-1)\Big |.
\end{equation}

\subsection{Completing the proof}

Using the Mellin inversion, we see that for the function $A(s, w)$ defined in \eqref{Aswzexp},
\begin{equation}
\label{Integral for all characters}
		\sum_{\substack{(n,2)=1}}L^{(2)}(\tfrac{1}{2}+\alpha, \chi_{n})w \bfrac {n}X=\frac1{2\pi i}\int\limits_{(2)}A\lz s,\tfrac12+\alpha\pz X^s\widehat w(s) \dif s,
\end{equation}
  where $\widehat{w}$ is the Mellin transform of $w$ given by
\begin{align*}
     \widehat{w}(s) =\int\limits^{\infty}_0w(t)t^s\frac {\dif t}{t}.
\end{align*}
Integration by parts renders that for any integer $E \geq 0$,
\begin{align}
\label{whatbound}
 \widehat w(s)  \ll  \frac{1}{(1+|s|)^{E}}.
\end{align}

We shift the line of integration in \eqref{Integral for all characters} to $\Re(s)=1/4+\varepsilon$.  The integral on the new line can be absorbed into the $O$-term in \eqref{Asymptotic for ratios of all characters} upon using \eqref{Stirlingratio}, \eqref{AboundS4} and \eqref{whatbound}.  We also encounter two simple poles at $s=1$ and $s=1-\alpha$ in the move with the corresponding residues given in \eqref{Residue at s=1} and \eqref{Aress1}, respectively. Direct computations now lead to the main terms given in \eqref{Asymptotic for ratios of all characters}. This completes the proof of Theorem \ref{Theorem for all characters}.

\vspace*{.5cm}

\noindent{\bf Declarations.} We declare that the authors have no competing interests as defined by Springer, or other interests that might be perceived to influence the results and/or discussion reported in this paper.  We further declare that there is no data associated with the results in this article. \newline

\noindent{\bf Acknowledgments.}  The authors are grateful to M. B. Milinovich for some helpful discussions.  P. G. was supported in part by NSFC grant 11871082 and L. Z. by the FRG Grant PS43707 at the University of New South Wales.  Moreover, the authors thank the anonymous referee for his/her very careful reading of the paper and many helpful comments and suggestions.

\bibliography{biblio}
\bibliographystyle{amsxport}

\end{document}